\providecommand{\U}[1]{\protect\rule{.1in}{.1in}}
\newtheorem{theorem}{Theorem}
\newtheorem{corollary}[theorem]{Corollary}
\newtheorem{lemma}[theorem]{Lemma}
\newtheorem{proposition}[theorem]{Proposition}
\newtheorem{remark}[theorem]{Remark}
\newcommand{\arccosh}{\operatorname{arccosh}}
\newcommand{\mb}[1]{{\mathbf{#1}}}
\newcommand{\mbx}{{\mathbf{x}}}
\newcommand{\mby}{{\mathbf{y}}}
\newcommand{\mbz}{{\mathbf{0}}}
\newcommand{\arcsinh}{\operatorname{arcsinh}}
\renewcommand{\H}{\mathbb{H}}
\newcommand{\sect}{\operatorname{Sect}}
\newcommand{\vol}{\operatorname{Vol}}
\newcommand{\B}{{\mathbb{B}}}
\newcommand{\R}{\mathbb{R}}
\newcommand{\dist}{\operatorname{dist}}
\newcommand{\e}{{\varepsilon}}
\newcommand{\dih}{\mathrm{d}_{\H^n}}
\newcommand{\diht}{\mathrm{d}_{\H^3}}
\begin{document}

\title[Convex sets in the hyperbolic space]{Boundary structure of convex sets in the hyperbolic space}
\author{Giona Veronelli}
\address{Universit\'e Paris 13, Sorbonne Paris Cit\'e, LAGA, CNRS ( UMR 7539)
99\\
avenue Jean-Baptiste Cl\'ement F-93430 Villetaneuse - FRANCE} \email{veronelli@math.univ-paris13.fr}
\date{\today}

\begin{abstract}
We prove some results concerning the boundary of a convex set  in $\H^n$. This includes the convergence of curvature measures under Hausdorff convergence of the sets, the study of normal points, and, for convex surfaces, a generalized Gauss equation and some natural characterizations of the regular part of the Gaussian curvature measure.
\end{abstract}

\maketitle

\tableofcontents

\section{Introduction}
In this short note, we discuss some properties of the boundary of convex sets with non-empty interior in the real $n$-dimensional hyperbolic space $\H^n$. Most of the results we will present are well-known for convex sets in the Euclidean space $\R^n$ (see \cite{Schn} for a general reference on the subject) and hence they are naturally expected to be true in $\H^n$ too. However, we have not found detailed proofs in the literature.

First, we prove that the curvature measures introduced by Kohlmann via a Steiner polynomial type formula, \cite{Ko-GeoDed}, are stable under Hausdorff convergence of convex sets (see Section \ref{sect_conv}). In dimension 2, this can be applied to get the validity of a generalized Gauss-Codazzi equation for the boundary of a convex set in $\H^3$, giving as in the smooth case the equivalence of the 2nd curvature measure and of the intrinsic curvature measure (in the sense of BIC surfaces) up to a constant factor (see Theorem \ref{th_Gauss}). This latter in turn guarantees that Kohlmann's 2nd curvature measure we use here is the same as the extrinsic measure of convex sets of $\H^3$ introduced by Alexandrov, whose construction exploits the local euclidean character of $\H^n$, \cite[p 397]{Al}. 

In a second part we will deal with the regularity of boundary points, proving that a.e. point is normal. This roughly means that the neighborhood of a.e. point in the boundary can be suitably approximated by a smooth surface. This permits to define a concept of local curvature $\mathrm{LocCurv}(q)$ at every normal point $q$ (see Corollary \ref{coro_biLip}). Using Gauss equation and the smooth approximation around normal points we will finally proof that $\mathrm{LocCurv}(q)$ is the regular part, given by Lebesgue decomposition theorem, of the curvature measure (see Section \ref{sect_reg}).
Since every surface with lower bounded curvature is locally isometric to the boundary of a convex set in a space-form, the local curvature $\mathrm{LocCurv}(q)$ can be used to characterize almost everywhere the Gaussian curvature with an approach which differs from the one followed by Machigashira, \cite{Machi}, who exploited in his definition the upper excess of geodesic triangles.

\section{Convergence of curvature measures}\label{sect_conv}
Let $\mathcal K(\H^n)$ be the set of compact convex sets in $\H^n$ with nonempty interior. For any $K\in\mathcal K(\H^n)$ and $\rho>0$ define the set 
\[
K_\rho:=\{x\in \H^n\ :\ \dih(x,K)\leq\rho\}.
\]
The maps $f_K:\H^n\setminus K\to \partial K$ and $F_K:\H^n\setminus K\to T_{\partial K}\H^n$ are defined by the relations 
\[ \dih(f_K(x),x)=\dih(x,K)\qquad\text{and}\qquad x=\exp^{\H^n}_{f_K(x)}(d(K,x)F_K(x)),
\]
and are well-defined since $K$ is convex. For $\beta\subset\H^n$, define also 
\[
M_\rho(K,\beta)=f_K^{-1}(\beta\cap \partial K)\cap(K_\rho\setminus K).\] 

Following \cite{Ko-GeoDed}, given a convex set $K$ and $\rho>0$, let us  define a Radon measure $\mu_\rho$ on the Borel $\sigma$-algebra of the hyperbolic space $\mathcal{B}(\H^n)$ by
$$
\mu_\rho(K,\beta)=\vol_{\H^n}(M_\rho(K,\beta)).
$$
Set 
$$\ell_{n+1-r}(t):=\int_0^t\sinh^{n-r}(x)\cosh^r(x)dx,\quad r=0,\dots,n.$$
Then P. Kohlmann proved the following Steiner-type formula.
\begin{theorem}[Theorem 2.7 in  \cite{Ko-GeoDed}]\label{th_kohl}
There exists a family $\{\Phi_r(K,\cdot)\}_{r=0}^{n}$ of measures on $\mathcal{B}(\H^n)$ such that
\begin{equation}\label{steiner}
\mu_\rho(K,\beta)=\sum_{r=0}^{n}\ell_{n+1-r}(\rho)\Phi_r(K,\beta),\quad\forall \beta\in\mathcal B(\H^n).
\end{equation}
\end{theorem}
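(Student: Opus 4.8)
The plan is to establish the existence of the measures $\Phi_r(K,\cdot)$ by a local-to-global argument that exploits the almost-Euclidean structure of $\H^n$ at small scales, mirroring the classical Steiner formula proof in $\R^n$. First I would fix a convex body $K$ and analyze the volume of the normal collar $M_\rho(K,\beta)$ by parametrizing $K_\rho\setminus K$ through the normal exponential map: every point in this region is written uniquely as $\exp^{\H^n}_{p}(t\,\nu)$ for $p=f_K(x)\in\partial K$, a unit normal $\nu=F_K(x)$, and $t=\dih(x,K)\in(0,\rho]$. The Jacobian of this parametrization, pulled back to the boundary, is governed by the hyperbolic Jacobi fields along geodesics normal to $\partial K$, which for a hypersurface of principal curvatures $\kappa_1,\dots,\kappa_{n-1}$ takes the form $\prod_{i}(\cosh t + \kappa_i\sinh t)$ times the area element on $\partial K$.

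The key computational step is to expand this Jacobian. Writing $\prod_{i=1}^{n-1}(\cosh t+\kappa_i\sinh t)$ and collecting terms, one obtains a sum $\sum_{r} \sinh^{n-1-j}(t)\cosh^{j}(t)\,e_{j}(\kappa)$ where $e_j$ are the elementary symmetric polynomials of the principal curvatures. Integrating in $t$ over $(0,\rho)$ then produces precisely the functions $\ell_{n+1-r}(\rho)=\int_0^\rho \sinh^{n-r}\cosh^r$ as coefficients, and the candidate measures $\Phi_r(K,\beta)$ emerge as the integrals over $\beta\cap\partial K$ of (normalized) elementary symmetric functions of the principal curvatures. I would carry out this computation first assuming $\partial K$ is $C^2$, where the principal curvatures are classically defined and the change-of-variables formula applies directly, yielding \eqref{steiner} with smooth integrands.

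The main obstacle is removing the smoothness hypothesis, since a general $K\in\mathcal K(\H^n)$ has a boundary that is merely Lipschitz and the principal curvatures need not exist pointwise. The standard remedy is to show that for each fixed $\rho$ the left-hand side $\mu_\rho(K,\beta)=\vol_{\H^n}(M_\rho(K,\beta))$ is a well-defined Radon measure in $\beta$ purely from the definition (using that $f_K$ is Lipschitz hence Borel-measurable, so preimages of Borel sets are Borel and the volume is countably additive), and then to extract the $\Phi_r$ by the structure of the right-hand side as a polynomial in the quantities $\ell_{n+1-r}(\rho)$. Concretely, I would evaluate \eqref{steiner} at $n+1$ distinct values $\rho_0,\dots,\rho_n$ and invert the resulting Vandermonde-type linear system, since the functions $\{\ell_{n+1-r}\}_{r=0}^n$ are linearly independent; this defines each $\Phi_r(K,\beta)$ as an explicit finite linear combination of the measures $\beta\mapsto\mu_{\rho_i}(K,\beta)$, hence as a signed measure, and one then checks independence of the chosen $\rho_i$ and positivity. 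The delicate point is verifying that this algebraically-defined $\Phi_r$ genuinely satisfies \eqref{steiner} for \emph{all} $\rho$, which I would obtain by approximating $K$ in the Hausdorff metric by convex bodies with $C^2$ boundary, applying the smooth formula, and passing to the limit, using the continuity of $\mu_\rho$ under Hausdorff convergence (the very stability result the paper sets out to prove, which must therefore be established independently here or the approximation handled by a direct monotonicity/measurability argument).
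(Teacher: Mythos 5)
The paper does not actually prove this statement: it is imported verbatim as Theorem 2.7 of \cite{Ko-GeoDed}, so there is no internal proof to compare your attempt against. Judged on its own merits, your treatment of the smooth case is the right computation --- parametrizing $K_\rho\setminus K$ by the normal exponential map, writing the Jacobian as $\prod_i(\cosh t+\kappa_i\sinh t)$, expanding into elementary symmetric functions and integrating in $t$ is exactly how one obtains \eqref{eq_regphi}. (One indexing caveat: for $K\subset\H^n$ the boundary carries $n-1$ principal curvatures, so this expansion produces $n$ coefficient functions $\int_0^\rho\sinh^{j}\cosh^{n-1-j}$, of total degree $n-1$ in $(\sinh,\cosh)$, whereas the stated $\ell_{n+1-r}$ have degree $n$ and there are $n+1$ of them; the statement as transcribed really matches convex bodies in an $(n+1)$-dimensional space form --- note the integrals over $d\mathcal H^n$ and the products over $n$ generalized curvatures in the remark following the theorem --- so your claim that the computation yields ``precisely'' these $\ell$'s glosses over a mismatch you would have to resolve.)

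The genuine gap is the passage to general $K$. Your Vandermonde inversion only produces candidate signed measures once you already know that $\rho\mapsto\mu_\rho(K,\beta)$ lies in the span of the $\ell$'s for \emph{every} Borel $\beta$ --- which is the whole content of the theorem --- and the approximation argument you propose to establish this needs $\mu_\rho(K_j,\beta)\to\mu_\rho(K,\beta)$, i.e., a strengthening of Proposition \ref{prop_continuity}. But the paper's proof of that proposition itself invokes Theorem \ref{th_kohl} to control $\mu_{\epsilon_j}(M_\rho(K,\H^n),\H^n)$, so your route is circular unless the continuity is proved independently (say via monotonicity of parallel volumes and the global Steiner formula, as Schneider does in $\R^n$); even then, weak convergence only identifies the limit on continuity sets, so you must in addition extract weak limits of the nonnegative measures $\Phi_r(K_j,\cdot)$ from a uniform total-mass bound and verify the identity for all $\rho$ for those limits. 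You flag this yourself, but flagging the missing step does not supply it, and it is where essentially all the work lies. For the record, Kohlmann's actual argument avoids approximation altogether: he works on the unit normal bundle $\mathcal N_K$, a Lipschitz manifold, computes the Jacobian of $(v,t)\mapsto\exp_{\Pi(v)}(tv)$ almost everywhere in terms of the generalized principal curvatures $\tilde k_i(v)$, and obtains the $\Phi_r$ directly as the integrals displayed in the remark after the theorem. Either route can be completed, but as written your reduction from the smooth case is not yet a proof.
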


To our purpose, it is worth that, whenever $\eta=\partial K\cap \beta$ is a $C^3$ surface, the Borel measures $\Phi_r(K,\cdot)$ have the nice expression
\begin{equation}\label{eq_regphi}
\Phi_r(K,\beta)=\binom{n}{r}\int_{\eta}H^K_{n-r}(q) d\sigma_{\partial K}(q),
\end{equation}
where $\sigma_{\partial K}$ is the surface measure on $\partial K$ induced by $\vol_{\H^n}$. Hence Theorem \ref{th_kohl} recovers the classical regular Steiner formula in $\H^n$, \cite{Al-BAMS}. Here, $H^K_k(q)$ is the $k$-th symmetric function of the principal curvatures of $\partial K$ at $q$. In particular $H^K_0=1$,  $H^K_1$ is the mean curvature of $\partial K\subset \H^n$ and $H^K_n$ its Gaussian curvature.

\begin{remark}{\rm
For completeness, we recall that Kohlmann proved also for non-regular convex sets a more general integral representation for the Borel measures $\Phi_r(K,\cdot)$. This is given by
\[\Phi_r(K,\beta)=\binom{n}{r}\int_{\Pi^{-1}(\beta)\cap \mathcal N_K}g(v)\tilde H_{n-r}(v) d\mathcal H^{n}(v),\quad r=0,\dots,n,\]
where
 \begin{itemize}
\item $\mathcal N_K\subset T\H^n$ is the unitary normal bundle along $\partial K$, 
\item $\Pi:T\H^n\to \H^n$ is the standard projection on the base point,
\item the product $g(v)\tilde H_r(v)$ is defined for $\mathcal H^n$-a.e. $v\in \mathcal N_K$ by the algebraic expressions
\[
\tilde H_r(v) := {\binom nr}^{-1}\sum_{1\leq i_1<\cdots< i_r\leq n}\prod_{j=1}^r \tilde k_{i_j}(v),\]
and 
\[
g(v):= \infty^{-s}\prod_{i=s+1}^n\sqrt{\tilde k_i^2(v)+1},
\]
\item the (almost everywhere defined) generalized principal curvatures $\tilde k_i(v)$ can be roughly seen as limits as $\epsilon\to 0$ of the principal curvatures of the parallel sets $\partial K_\epsilon$ at point $\exp_{\Pi(v)}(\epsilon v)$, and $s=s(v)$ is such that $\tilde k_i(v)=\infty$ if and only if $i\leq s$. 
\end{itemize}
The pretty involved rigorous definitions and further details can be found in \cite[Sections 1 and 2]{Ko-GeoDed}. }\end{remark}

As in the classical setting of convex bodies in Euclidean space, \cite{Schn}, the curvature measures introduced by Kohlmann are solid enough to be weak continuous with respect to the topology induced on $\mathcal K(\H^n)$ by the Hausdorff distance $\dist_{\mathcal H}(K,L):= \inf\{\lambda>0\ : K\subset B_\lambda(L)\text{ and }L\subset B_\lambda(K)\}$, with $B_\lambda(L):=\{q\in\H^n\ :\ \dih(q,L)<\lambda\}$.
\begin{theorem}\label{th_convergingcurv}
Let $\{K_j\}_{j=1}^\infty\subset\mathcal K(\H^n)$ be a sequence of convex sets such that $K_j\to K$ as $j\to\infty$ in the Hausdorff topology. Then for every $r=0,\dots, n$ we have 
\[\Phi_r(K_j,\cdot)\to\Phi_r(K,\cdot) \]
as $j\to\infty$, weakly in the sense of measure.
\end{theorem}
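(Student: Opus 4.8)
The plan is to prove weak convergence of the measures $\Phi_r(K_j,\cdot)$ by exploiting the Steiner-type formula \eqref{steiner} together with the weak convergence of the parallel-volume measures $\mu_\rho(K_j,\cdot)$. The central observation is that the $\ell_{n+1-r}(\rho)$ form, as $\rho$ ranges over $n+1$ distinct positive values, a Vandermonde-type invertible system: since $\mu_\rho(K,\cdot)=\sum_{r=0}^n \ell_{n+1-r}(\rho)\Phi_r(K,\cdot)$ is a polynomial-like identity in the parameters $\ell_{n+1-r}(\rho)$, the individual measures $\Phi_r(K,\cdot)$ can be recovered as explicit fixed linear combinations of the measures $\mu_{\rho_0}(K,\cdot),\dots,\mu_{\rho_n}(K,\cdot)$ for suitably chosen radii $\rho_0<\cdots<\rho_n$. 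Because weak convergence of measures is preserved under finite linear combinations, it therefore suffices to prove that for each fixed $\rho>0$ we have $\mu_\rho(K_j,\cdot)\to\mu_\rho(K,\cdot)$ weakly.

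First I would establish the invertibility of the coefficient matrix. I would verify that the functions $\rho\mapsto\ell_{n+1-r}(\rho)$, $r=0,\dots,n$, are linearly independent on $(0,\infty)$ — this follows since their derivatives $\sinh^{n-r}\cosh^r$ are distinct monomials in $\sinh,\cosh$ and hence linearly independent — so that one can select radii $\rho_0<\cdots<\rho_n$ for which the $(n+1)\times(n+1)$ matrix $(\ell_{n+1-r}(\rho_i))_{i,r}$ is nonsingular. Denoting its inverse by $(a_{ri})$, one obtains the representation $\Phi_r(K,\cdot)=\sum_{i=0}^n a_{ri}\,\mu_{\rho_i}(K,\cdot)$, valid for every $K\in\mathcal K(\H^n)$ with the same universal constants $a_{ri}$.

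The main work is then the weak convergence $\mu_\rho(K_j,\cdot)\to\mu_\rho(K,\cdot)$. To test weak convergence against a bounded continuous $\vphi$, I would write $\int\vphi\,d\mu_\rho(K,\cdot)=\int_{K_\rho\setminus K}\vphi(f_K(x))\,d\vol_{\H^n}(x)$ using the definition of $\mu_\rho$ via $M_\rho$ and $f_K$. The strategy is to show that Hausdorff convergence $K_j\to K$ forces both the domains $K_{j,\rho}\setminus K_j$ and the nearest-point projections $f_{K_j}$ to converge appropriately, so that these integrals converge by dominated convergence. Concretely, Hausdorff convergence of the convex sets implies convergence of the parallel sets $K_{j,\rho}\to K_\rho$ (the signed distance function $\dih(\cdot,K_j)$ converges locally uniformly to $\dih(\cdot,K)$, since the distance to a convex set is $1$-Lipschitz and determined by the set), hence convergence of the characteristic functions $\mathbf{1}_{K_{j,\rho}\setminus K_j}\to\mathbf{1}_{K_\rho\setminus K}$ pointwise a.e.\ away from the boundary $\partial(K_\rho\setminus K)$, which has measure zero. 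Simultaneously, for $x$ outside $K$ the nearest-point projection $f_{K_j}(x)\to f_K(x)$, because on a convex set the metric projection in $\H^n$ is single-valued and depends continuously on the set under Hausdorff convergence. The integrand $\vphi(f_{K_j}(x))\mathbf{1}_{K_{j,\rho}\setminus K_j}(x)$ is uniformly bounded by $\sup|\vphi|$ and supported in a fixed compact neighborhood of $K_\rho$, so dominated convergence yields $\int\vphi\,d\mu_\rho(K_j,\cdot)\to\int\vphi\,d\mu_\rho(K,\cdot)$.

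The hard part will be making the convergence $f_{K_j}(x)\to f_K(x)$ and the a.e.\ convergence of the domain indicators fully rigorous near the boundary set $\partial K$, where the projection $f_K$ can be discontinuous and where the integrand need not converge pointwise. To handle the projection I would use a compactness argument: the nearest points $f_{K_j}(x)\in\partial K_j$ lie in a fixed compact set and any limit point must realize the distance to $K$, which by uniqueness of the metric projection onto a convex set equals $f_K(x)$; this gives convergence for every $x\notin\partial K$. The measure-zero exceptional set $\partial K$ (and the image of the singular directions) causes no trouble precisely because $\vol_{\H^n}(\partial K)=0$ and $\vol_{\H^n}(\partial(K_\rho\setminus K))=0$, so the set where the integrand may fail to converge is $\vol_{\H^n}$-null. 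Establishing these negligibility statements carefully — in particular that the boundaries of the relevant parallel sets are Lebesgue-null, which follows from convexity and the Lipschitz regularity of $\dih(\cdot,K)$ — is the technical crux; once it is in place, the dominated convergence step and the linear-algebra inversion combine to give the claim.
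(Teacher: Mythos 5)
Your proof is correct and reaches the theorem, but the route through the key step differs from the paper's. The reduction is the same in spirit: the paper simply declares the theorem a ``direct consequence'' of the weak convergence of $\mu_\rho(K_j,\cdot)$ via the Steiner formula, and your Vandermonde-type inversion (choosing $\rho_0<\dots<\rho_n$ so that $(\ell_{n+1-r}(\rho_i))$ is nonsingular, which follows from the linear independence of the $\ell_{n+1-r}$) is exactly the standard way to make that reduction explicit. Where you diverge is in proving $\mu_\rho(K_j,\cdot)\to\mu_\rho(K,\cdot)$. The paper first establishes a \emph{quantitative} projection lemma, $\dih(f_K(x),f_L(x))=O(\dist_{\mathcal H}(K,L)^{1/2})$, then argues via the portmanteau characterization: Fatou's lemma gives $\mu_\rho(K,U)\leq\liminf_j\mu_\rho(K_j,U)$ for open $U$, and the matching bound $\limsup_j\mu_\rho(K_j,\H^n)\leq\mu_\rho(K,\H^n)$ on total masses is obtained by a second application of the Steiner formula to the (convex) outer parallel body $K_\rho$. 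You instead view $\mu_\rho(K,\cdot)$ as the pushforward of $\vol_{\H^n}|_{K_\rho\setminus K}$ under $f_K$ and apply dominated convergence, needing only a qualitative compactness-plus-uniqueness argument for $f_{K_j}(x)\to f_K(x)$ and the $\vol_{\H^n}$-negligibility of $\partial K\cup\partial K_\rho$. Your version is more elementary and self-contained (no portmanteau theorem, no reuse of the Steiner formula for parallel bodies); the paper's quantitative lemma, on the other hand, is recycled later in the proof of Theorem \ref{th_regpart}, so the extra effort pays off elsewhere. One point you should make explicit: the pointwise convergence $\mathbf{1}_{K_{j,\rho}\setminus K_j}\to\mathbf{1}_{K_\rho\setminus K}$ on $\mathrm{int}(K)$ does \emph{not} follow from the locally uniform convergence of the (unsigned) distance functions alone, since $\dih(x,K_j)\leq\delta_j$ does not force $x\in K_j$; you need the standard convexity argument that any fixed interior point of $K$ (indeed any compact subset of $\mathrm{int}(K)$) is eventually contained in $K_j$, using a supporting hyperplane and a ball $B_\epsilon(x)\subset K$. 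With that inserted, the exceptional set is indeed $\partial K\cup\partial K_\rho$, which is Lebesgue-null by convexity, and your dominated convergence step goes through.
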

According to the Steiner formula \eqref{steiner}, this latter theorem is a direct consequence of the following

\begin{proposition}\label{prop_continuity}
Let $\{K_j\}_{j=1}^\infty\subset\mathcal K(\H^n)$ be a sequence of convex sets such that $K_j\to K$ as $j\to\infty$ in the Hausdorff topology. Then, for every $\rho>0$,
\[\mu_\rho(K_j,\cdot)\to\mu_\rho(K,\cdot) \]
as $j\to\infty$, weakly in the sense of measure.
\end{proposition}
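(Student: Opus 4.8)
The goal is to show that weak convergence of measures $\mu_\rho(K_j,\cdot)\to\mu_\rho(K,\cdot)$ follows from Hausdorff convergence $K_j\to K$. By the Portmanteau-type criterion for weak convergence of Radon measures, it suffices to prove that
$$\int_{\H^n}\varphi\,d\mu_\rho(K_j,\cdot)\to\int_{\H^n}\varphi\,d\mu_\rho(K,\cdot)$$
for every $\varphi\in C_c(\H^n)$. Unwinding the definition $\mu_\rho(K,\beta)=\vol_{\H^n}(M_\rho(K,\beta))$ and recalling $M_\rho(K,\beta)=f_K^{-1}(\beta\cap\partial K)\cap(K_\rho\setminus K)$, this integral rewrites as an integral over the annular region $K_\rho\setminus K$ of the pushed-forward test function $\varphi\circ f_K$. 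So the plan is to reduce everything to the convergence
$$\int_{K_{j,\rho}\setminus K_j}\varphi(f_{K_j}(x))\,dv_{\H^n}(x)\longrightarrow\int_{K_\rho\setminus K}\varphi(f_K(x))\,dv_{\H^n}(x).$$

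The first key step is to establish the domain convergence: since $K_j\to K$ in the Hausdorff metric, one shows that the indicator functions $\mathbf 1_{K_{j,\rho}\setminus K_j}$ converge to $\mathbf 1_{K_\rho\setminus K}$ in $L^1(\H^n)$, or at least pointwise $v_{\H^n}$-a.e. This should follow from the fact that Hausdorff convergence of the $K_j$ forces Hausdorff convergence of the $\rho$-parallel sets $K_{j,\rho}\to K_\rho$ (because $\dih(\cdot,K_j)\to\dih(\cdot,K)$ locally uniformly, the distance function being $1$-Lipschitz), together with the fact that the boundaries of convex sets are $v_{\H^n}$-null, so the symmetric differences have vanishing volume. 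The second key step is the convergence of the integrands $\varphi\circ f_{K_j}\to\varphi\circ f_K$. Here I would argue that the nearest-point projections $f_{K_j}$ converge to $f_K$ pointwise on $\H^n\setminus K$ (away from a null set): uniqueness of the nearest point on a convex set, combined with Hausdorff convergence, gives that any accumulation point of $f_{K_j}(x)$ must be the nearest point of $x$ on $K$, hence equal to $f_K(x)$. Since $\varphi$ is continuous, $\varphi\circ f_{K_j}\to\varphi\circ f_K$ pointwise wherever the projections converge.

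With these two ingredients, the conclusion follows by a dominated convergence argument: the integrands are uniformly bounded by $\sup|\varphi|$ and supported in a fixed bounded region (by properness of the test function's support together with uniform boundedness of the $K_{j,\rho}$, which holds since a Hausdorff-convergent sequence is eventually contained in a fixed ball). Combining the a.e.\ convergence of the integrands with the a.e.\ convergence of the domain indicators, and controlling the product via a uniform dominating function, yields convergence of the integrals.

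I expect the main obstacle to lie in the \emph{joint} control of the two convergences near the boundary $\partial K$, where $f_K$ is genuinely multivalued in the limiting geometry and where the projection maps $f_{K_j}$ can behave wildly. The clean fix is to isolate a $v_{\H^n}$-null exceptional set off which everything converges: precisely, convexity guarantees that the set of points in $\H^n\setminus K$ admitting more than one nearest point on $K$ has measure zero (a hyperbolic analogue of Reidemeister's theorem on the differentiability of the distance function), and on the complement of this set the projection $f_K$ is single-valued and continuous. Making this null-set argument work in $\H^n$ rather than $\R^n$ requires checking that the relevant convexity and differentiability properties of the distance function transfer to the hyperbolic setting, which is where care is needed but where no essential new difficulty should arise, the local Euclidean structure of $\H^n$ being enough.
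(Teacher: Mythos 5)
Your proof is correct, but it takes a genuinely different route from the paper's. The paper verifies the portmanteau criteria directly: a quantitative stability lemma for the nearest-point projection (Lemma \ref{lem_projection}, giving $\dih(f_K(x),f_{K_j}(x))=O(\delta_j^{1/2})$ with $\delta_j=\dist_{\mathcal H}(K,K_j)$) yields $M_\rho(K,U)\setminus\partial K_\rho\subset\liminf_j M_\rho(K_j,U)$ for every open $U$, hence $\mu_\rho(K,U)\leq\liminf_j\mu_\rho(K_j,U)$ by Fatou, while the matching bound $\limsup_j\mu_\rho(K_j,\H^n)\leq\mu_\rho(K,\H^n)$ on total masses is extracted from the inclusion $M_\rho(K_j,\H^n)\subset M_{\rho+\epsilon_j}(K,\H^n)$ together with the Steiner formula of Theorem \ref{th_kohl}. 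You instead write $\mu_\rho(K,\cdot)$ as the pushforward of $\vol_{\H^n}$ restricted to $K_\rho\setminus K$ under $f_K$ and run dominated convergence; this needs only the soft facts that $f_{K_j}\to f_K$ pointwise off $K$ and that the domain indicators converge a.e., so it bypasses both the quantitative lemma and the Steiner formula (which the paper uses only to control total masses), at the price of not producing the $O(\delta^{1/2})$ estimate that the paper reuses later in the proof of Theorem \ref{th_regpart}. Two remarks. First, the ``main obstacle'' you anticipate is not there: in $\H^n$ the nearest-point projection onto a closed convex set is single-valued and $1$-Lipschitz on all of $\H^n\setminus K$ (Busemann--Feller, \cite[Proposition II.2.4]{BH}, already invoked in the paper), so no null exceptional set of multivalued points is needed, and your compactness argument --- every accumulation point of $f_{K_j}(x)$ realizes $\dih(x,K)$, hence equals $f_K(x)$ --- identifies the limit at every point outside $K$. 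Second, the a.e.\ convergence of the indicators of $K_{j,\rho}\setminus K_j$ requires, besides $\vol_{\H^n}(\partial K)=\vol_{\H^n}(\partial K_\rho)=0$, the observation that every compact subset of $\mathrm{int}(K)$ is eventually contained in $K_j$; this does not follow from Hausdorff convergence alone but from a separating-hyperplane argument for convex sets, which transfers to $\H^n$ without difficulty. With these points made explicit, your argument is complete.
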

\begin{proof}
We start proving the following lemma

\begin{lemma}\label{lem_projection}
Let $K,L\subset \H^n$ be closed convex sets and $x\in \H^n$.\\ 
Let $d=\min\{\dih(x,f_K(x));\dih(x,f_L(x))\}$ and $\delta=\dist_{\mathcal H}(K,L)$. Then 
\begin{equation}\label{eq_proj}\dih(f_K(x),f_L(x))\leq \arccosh \left(\cosh \delta \frac{\cosh(d+\delta)}{\cosh(d-\delta)}\right),\end{equation}
whenever $\delta<d$. In particular
\[
\dih(f_K(x),f_L(x))\leq 2(\tanh d)^{1/2}\delta^{1/2}+O(\delta),\quad \text{as }\delta\to 0.\]
\end{lemma}
\begin{proof}
We adapt to the hyperbolic setting the corresponding Euclidean proof \cite[Lemma 1.8.9]{Schn}. The inequality is trivial if $x\in K\cap L$. If $x\in K\setminus L$, then 
\[
\dih(f_K(x),f_L(x))=\dih(x,f_L(x))=\dih(x,L)\leq \delta,
\]
so that \eqref{eq_proj} is satisfied. So let us suppose $x\not\in (K\cup L)$ and $d=\dih(x,f_K(x))$. By definition of $\delta$, we have $L\cap B_\delta(f_K(x))\neq \emptyset$, which implies $\dih(L,x)\leq d+\delta$ and $f_L(x)\in B_{d+\delta}(x)$.
Let $\gamma_1$ be the minimizing geodesics connecting  $f_L(x)$ and $f_K(f_L(x))$. It holds $L(\gamma_1)=\dih(f_L(x),K)\leq \delta$. Let $P$ be the totally geodesic hyperbolic hyperplane which is a suport plane to $K$ at $f_K(x)$ and is orthogonal to $F_K(x)$. 

Suppose first that $f_L(x)$ and $f_K(x)$ are in the same half-space with respect ot $P$. Then the angle $\alpha:=\widehat{xf_K(x)f_L(x)}\geq\pi/2$ so that by the hyperbolic cosine law
$$\cosh( \dih(f_K(x),f_L(x)))\cosh( \dih(f_K(x),x))\leq \cosh( \dih(x,f_L(x)))\leq\cosh(d+\delta).$$ Hence
$$
\cosh( \dih(f_K(x),f_L(x)))\leq \frac{\cosh(d+\delta)}{\cosh( d)}\leq \cosh(\delta)\frac{\cosh(d+\delta)}{\cosh( d-\delta)}.
$$

Suppose now that $f_L(x)$ and $f_K(x)$ are in opposite half-spaces with respect to $P$. Let $b$ be the intersection point of $\gamma_1$ with $P$, so that $\dih(b,f_L(x))\leq\delta$. Let $E$ be the point obtained by projecting $f_L(x)$ onto the (convex) geodesic $\gamma_2$ of $\H^n$ which pass through $x$ and $f_K(x)$. Note that $f_K(x)$ is the projected of $B$ onto $\gamma_2$. By the Hyperbolic Busemann-Feller Lemma, \cite[Proposition II.2.4]{BH}, projection onto a complete convex set in $\H^n$ is distance decreasing. In particular $\dih(E,f_K(x))\leq \dih(b,f_L(x))\leq\delta<d$, and $E$ is between $x$ and $f_K(x)$ on $\gamma_2$. Then $\dih(x,E)=\dih(x,f_K(x))-\dih(E,f_K(x))\geq d-\delta$. Pythagoras'
theorem for hyperbolic triangles gives 
\[\cosh(\dih(x,f_L(x)))=\cosh(\dih(E,f_L(x)))\cosh(\dih(x,E))\] and 
\[\cosh(\dih(f_K(x),f_L(x)))=\cosh(\dih(E,f_L(x)))\cosh(\dih(f_K(x),E)). \]
All together implies 
\[\cosh(\dih(f_K(x),f_L(x)))\leq \cosh(\delta)\frac{\cosh(d+\delta)}{\cosh( d-\delta)} \]
as desired.
\end{proof}
We come back to the proof of Proposition \ref{prop_continuity}, inspired by \cite[Theorem 4.1.1]{Schn}. Let $\{K_j\}_{j=1}^\infty\subset\mathcal K(\H^n)$ be a sequence of convex sets such that $K_j\to K$ as $j\to\infty$ in the Hausdorff topology. Let $U\subset \H^n$ be an open set and let $q
\in M_\rho(K,U)\setminus \partial K_\rho$, so that $f_K(q)\in U$. According to Lemma \ref{lem_projection}, for $j$ large enough $f_{K_j}(q)\in U$ and $\dih(q,K_j)\leq\dih(q,K)+ O((\dist_{\mathcal{H}}(K,K_j))^{1/2})<\rho$. Then for $j$ large enough $q\in M_\rho(K_j,U)$, so that $M_\rho(K,U)\setminus \partial K_\rho\subset \liminf_{j\to\infty}M_\rho(K_j,U)$. By Fatou's lemma
\begin{align}\label{Fatou}
\mu_\rho(K,U)=\vol_{\H^n}\left(M_\rho(K,U)\setminus \partial K_\rho\right) \leq \liminf_{j\to\infty}\mu_\rho(K_j,U).
\end{align}
On the other hand, let $\delta_j=\dist_{\mathcal H}(K,K_j)$. Then, reasoning as above, $M_\rho(K_j,\H^n)\subset M_{\rho+\epsilon_j}(K,\H^n)=M_{\epsilon_j}(M_{\rho}(K,\H^n),\H^n)$, where $\epsilon_j\to0$ as $j\to\infty$. Since $M_{\rho}(K,\H^n)\in \mathcal K(\H^n)$, by Theorem \ref{th_kohl}
\[
\mu_{\epsilon_j}(M_{\rho}(K,\H^n),\H^n)=\mu_{\rho}(K,\H^n)+ O(\epsilon_j),\]
so that 
\[\limsup_{j\to\infty} \mu_\rho(K_j,\H^n) \leq \mu_{\rho}(K,\H^n).
\]
This latter, together with \eqref{Fatou} and standard measure theory, \cite[p. 196]{Ash}, concludes the proof.
\end{proof}


\section{Boundary structure}\label{sect_bdy} In this section we study the regularity of the boundary of a convex set $K\in\mathcal K(\H^n)$. As in the Euclidean setting, the main tool is the first and second differentiability property of real convex functions at almost every point. 

Up to an isometry of the ambient space, the boundary of $K$ is locally the graph of a (horo)convex function. Namely, let $q_0\in \partial K$. Consider the Poincar\'e half-space model $\H^n$ with its global coordinates system $(\mathbf{x},z)_E\in \R^{n-1}\times(0,+\infty)$ and metric $g_{\H^n}=z^{-2}(d\mathbf{x}^2+dz^2)$ of constant curvature $-1$, where $d\mathbf{x}^2=\sum_{i=1}^{n-1}dx_i^2$ is the Euclidean metric of $\R^{n-1}$.  Up to an isometry of $\H^n$ we can suppose that $q_0=(\mathbf 0,1)_E$ and, since $K$ has nonempty interior, that $\{(\mathbf 0,1+t)_E\ :\ 0<t\leq t_0\}\subset \mathrm{int}(K)$ for $t_0$ small enough.
Consider the horosphere $\mathfrak{h}=\{(\mathbf x,1)_E~:~\,\mathbf x\in\R^{n-1}\}$ and note that $\mathbf x\mapsto (\mathbf x,1)_E$ is an isometry from $\R^{n-1}$ to $\mathfrak{h}$. As in \cite{FIV}, let us introduce a coordinate system $(\mathbf{\xi},\zeta)_{\mathfrak{h}}$ on $\H^n$, called horospherical coordinates, defined by $(\mathbf{\xi},\zeta)_{\mathfrak{h}}=(\mathbf{\xi},e^{-\zeta})_{E}$ so that $\mathfrak{h}=\{(\mathbf \xi,0)_{\mathfrak{h}}:\xi\in\R^{n-1}\}\cong\R^{n-1}$ and $\zeta$ is the signed hyperbolic distance from $(\mathbf \xi,e^{-\zeta})_E$ to the horosphere $\mathfrak{h}$, which is realized by the the "vertical" unit speed geodesic $t\in[0,\zeta]\mapsto(\mathbf x,t)_{\mathfrak{h}}$. We have chosen the sign of $\zeta$ in such a way that $(\mathbf 0,\zeta)_\mathfrak{h}\not\in K(u)$ for $\zeta>0$ small enough. Note also that the hyperbolic metric in horospherical coordinates at $(\mb\xi,\zeta)_\mathfrak{h}$ is given by \begin{equation}\label{horometric}
e^{2\zeta} d\mb\xi^2+d\zeta^2.
\end{equation}

Thanks to the relative position of $K$ and $\mathfrak{h}$, there exists $R>0$ and $u:\mathbb B_R\subset \R^{n-1}\to(0,+\infty)$ such that a neighborhood of $x_0$ in $\partial K$ coincides with the graph 
\begin{equation}\label{graph}\mathfrak g_\mathfrak{h}(u):=\{(\mathbf x,u(\mathbf x))_\mathfrak{h}\in\H^{n}\ :\ \mathbf x\in\B_R\}\end{equation}
of $u$ on $\B_R$. Here and on, $\B_R(\mbx)\subset \R^{n-1}$ denotes the Euclidan ball of radius $R$ centered at $\mbx$, and we just write $\B_R$ whenever $\mbx=\mbz$. Since $K$ is convex, $u$ is \textsl{horoconvex}\footnote{The definition of horoconvex function could be misleading, since there is in the literature a well studied concept of horoconvex sets (also called h-convex or horospherically convex) which denote sets in $\mathcal K(\H^n)$ having at each point a supporting horosphere. This is not directly related to horoconvex functions.}, which means that the function $h_u:\mathbb B_R\to \R$ defined by 
$$h_u(\mathbf x)= e^{-2u(\mathbf x)}+|\mathbf x|^2$$ is convex. By the way, this guarantees that both $u$ and $h_u$ are Lipschitz. Conversely, whenever $v:\mathbb B_R\subset \R^{n-1}\to(0,+\infty)$ is horoconvex, then the graph $\mathfrak g_\mathfrak{h}(v)$ of $v$, defined as in \eqref{graph}, is an open set of the boundary of a convex set $K=K(v)\in\mathcal K(\H^3)$.
Note that, in order to define the whole convex set $K(v)$ containing $\mathfrak g_\mathfrak{h}(v)$ (and thus  rigorously justify some subsequent arguments), one can for instance choose 
\begin{equation}
\label{def_Kv}
K(v)=\{(\mathbf x, z)\in\H^n\ :\ \mathbf x\in \mathbb B_R\text{ and }v(\mathbf x)\leq z \leq \zeta\},\end{equation}
with $\zeta>\max_{\B_R}v$ large enough to be fixed later. 
The equivalence between convexity of sets and horoconvexity of functions is proved for instance in \cite{GSS-JGA} for smooth convex sets and in \cite[Proposition 2.6]{FIV} for non-necessarily smooth convex sets in $\H^3$. The generalization to any dimension is straightforward. 

Since $h_u$ is convex, according to a celebrated theorem by Busemann-Feller and Alexandrov, \cite{BF,Al-Lenin}, the set of points $$\mathcal D_u''=\{\mathbf x\in\B_R\ :\ h_u\text{ is 2 times differentiable at }\mathbf x\}$$
has full Lebesgue measure in $\B_R$. Namely, for $\mathbf x\in\mathcal D_u''$, the subgradient $\partial_{\mathbf{x}}h_u$ of $h_u$ at $\mbx$, defined by
$$\partial_{\mathbf{x}}h_u:=\{\mathbf v\in\R^{n-1}\ :\ \forall \mby \in \B_R,\ h_u(\mby)\geq h_u(\mbx)+\left\langle \mathbf v,\mby-\mbx\right\rangle\},
$$ contains a unique element $\nabla_{\mathbf{x}}h_u$ and there exists a symmetric matrix $\nabla^2_{\mathbf x}h_u$ such that
\[
h_u(\mathbf y)=h_u(\mathbf x)+\left\langle\nabla_{\mathbf x}h_u,\mathbf y-\mathbf x\right\rangle+\frac12 \left\langle \nabla^2_{\mathbf x}h_u(\mathbf y-\mathbf x),\mathbf y-\mathbf x\right\rangle + o(|\mathbf y-\mathbf x|^2).
\]
An easy computation shows then that also $u$ is two times differentiable at $\mathbf x$, i.e.
\begin{equation}\label{2ndord}
u(\mathbf y)=\bar u(\mathbf y)+  o(|\mathbf y-\mathbf x|^2),
\end{equation}
where 
\begin{equation}\label{def_ubar}
\bar u(\mathbf y)=u(\mathbf x)+\left\langle\nabla_{\mathbf x}u,\mathbf y-\mathbf x\right\rangle+\frac12 \left\langle\nabla^2_{\mathbf x}u (\mathbf y-\mathbf x),\mathbf y-\mathbf x\right\rangle,
\end{equation}
with $\nabla_{\mathbf x}u=-\frac12 e^{2u(\mathbf x)}(\nabla_{\mathbf x}h_u-2\mathbf x)$ and 
\[\nabla^2_{\mathbf x}u=-\frac{1}{2}e^{2u(\mathbf x)}\nabla^2_{\mathbf x}h_u+e^{2u(\mathbf x)} I_{\R^{n-1}}
+2\nabla_{\mathbf x}u\otimes \nabla_{\mathbf x}u. \]

In what follows we want to prove that almost every point of the boundary is normal, that is that the second differentiability property at a.e. point of the horoconvex representation is preserved under changes of the reference horosphere. 

\textsl{Smooth} points are those boundary  points admitting a unique support hyperbolic (hyper)plane. The smoothness of a point $q\in \partial K$ clearly do not depend on the chosen horosphere $\mathfrak{h}$, since  it is equivalent to the fact that the smooth convex surface $\mathfrak g_\mathfrak{h}(\bar u)$ is tangent to $\partial K$ at $q$. In particular, by Rademacher's theorem a.e. boundary point is smooth. So let $q\in \partial K$ be a smooth point. There is a unique exterior unit normal $\nu_q$ to $\partial K$ at $q$, and hence a unique interior tangent horosphere $\mathfrak{h}_q$, that is the horosphere orthogonal to $\nu_q$ and such that $\exp_q(-t\nu_q)\in \mathrm{int}(K)$ for $t>0$ small enough. Since $\partial K$ is tangent to $\mathfrak{h}_q$ at $q$, it admits a (unique) local parametrization via a horoconvex function $u_q:\B_\epsilon\subset \mathfrak{h}_q\to \R$, for some small $\epsilon>0$, with 
\begin{equation}\label{1stordernormal}
u_q(\mathbf 0)=0, \quad\text{ and }\quad \nabla_{\mathbf 0}u_q=0.
\end{equation} 
We say that $q$ is a \textsl{normal} boundary point if $u_q$ is two times differentiable at $q$. By the way, note that at normal points, $\nabla u_q$ is differentiable, in the sense that
\begin{equation}\label{mignot}
\sup_{\mbx\in \B_\epsilon}\sup_{V\in\partial_\mbx u_q} |V - \nabla_\mbx \bar u_q |= \sup_{\mbx\in \B_\epsilon}\sup_{V\in\partial_\mbx u_q} |V - \nabla^2_\mbz u_q (\mbx) |=o(\epsilon),
\end{equation}
see \cite[Corollary 2.4]{Rock}. By properties \eqref{1stordernormal}, here the expression for $\bar u_q$ simplifies as \[\bar u_q(\mbx)=\frac{1}{2}\nabla^2_\mbz u_q (\mbx,\mbx).\]

\begin{proposition}\label{prop_normal}
Let $K\in \mathcal K (\H^n)$. Then $\mathcal H^{n-1}$-a.e. point in $\partial K$ is normal.
\end{proposition}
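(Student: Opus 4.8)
The plan is to prove the statement chart by chart. Cover $\partial K$ by countably many graph neighborhoods of the form \eqref{graph}, each associated to some reference horosphere $\mathfrak h$ and horoconvex $u\colon\B_R\to(0,+\infty)$; since a countable union of $\mathcal H^{n-1}$-null sets is $\mathcal H^{n-1}$-null, it suffices to show that $\mathcal H^{n-1}$-a.e.\ $q$ in a single graph $\mathfrak g_{\mathfrak h}(u)$ is normal. In such a chart the map $\mbx\mapsto(\mbx,u(\mbx))_{\mathfrak h}$ is bi-Lipschitz onto the graph (its inverse is the base projection and $u$ is Lipschitz), and on the bounded set $\B_R$ the hyperbolic surface measure $\mathcal H^{n-1}$ is comparable to the push-forward of Lebesgue measure; hence null sets correspond under this map. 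By Rademacher's theorem a.e.\ $q$ is smooth, and by Alexandrov's theorem (applied to $h_u$) a.e.\ $q$ is the image of some $\mbx\in\mathcal D_u''$. Therefore Proposition \ref{prop_normal} reduces to the pointwise claim: \emph{if $q=(\mbx,u(\mbx))_{\mathfrak h}$ is smooth and $\mbx\in\mathcal D_u''$, then the tangential representation $u_q$ is two times differentiable at $\mbz$.}

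To prove this claim I would compare the fixed horospherical coordinates $(\mb\xi,\zeta)_{\mathfrak h}$ with the tangential ones $(\mb\xi',\zeta')_{\mathfrak h_q}$ adapted to the interior tangent horosphere $\mathfrak h_q$ at $q$. The transition map $\Phi$ between these two systems is a $C^\infty$ diffeomorphism of open subsets of $\R^n$, being a transition between two smooth charts of $\H^n$ (concretely, it is read off from the isometry of $\H^n$ carrying $\mathfrak h$ onto $\mathfrak h_q$, the isometry group acting transitively on horospheres). Because $q$ is smooth, its unique support hyperplane $T_q\partial K$ coincides with the tangent space of $\mathfrak h_q$ at $q$; this is precisely the tangency encoded by \eqref{1stordernormal}, and it makes $T_q\partial K$ transverse to the vertical geodesic of $\mathfrak h_q$ through $q$. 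Consequently the image under $\Phi$ of the graph $\{(\mby,u(\mby))_{\mathfrak h}\}$ is, near $q$, transverse to the vertical geodesics of $\mathfrak h_q$ and so can be written as a graph $\{(\mb\xi',u_q(\mb\xi'))_{\mathfrak h_q}\}$ via the implicit function theorem, with $u_q(\mbz)=0$ and $\nabla_{\mbz}u_q=0$.

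The core of the proof is then to transport the second-order expansion \eqref{2ndord}--\eqref{def_ubar} through $\Phi$. Starting from $u(\mby)=\bar u(\mby)+o(|\mby-\mbx|^2)$, I would substitute the graph map into $\Phi$, Taylor-expand $\Phi$ to second order at $(\mbx,u(\mbx))$ using its $C^2$ smoothness, and invert for the first $n-1$ coordinates to express $u_q$ as a function of $\mb\xi'$. A second-order chain rule (in Peano form) then yields $u_q(\mb\xi')=\tfrac12\nabla^2_{\mbz}u_q(\mb\xi',\mb\xi')+o(|\mb\xi'|^2)$, i.e.\ two times differentiability at $\mbz$ and hence normality of $q$. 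The main obstacle is that $u$ is differentiable to second order only \emph{at the single point} $\mbx$, not on a neighborhood, so the naive chain rule is not available and the $o(\,\cdot\,)$ remainder must be controlled uniformly enough to survive the implicit-function inversion. This is exactly where the Mignot--Rockafellar estimate \eqref{mignot} enters: it promotes the pointwise expansion to the uniform statement that every subgradient of $u$ at points near $\mbx$ lies within $o(\,\cdot\,)$ of the affine map $\mby\mapsto\nabla_{\mbx}u+\nabla^2_{\mbx}u(\mby-\mbx)$, providing the uniform first-order control of the graph needed to push the remainder estimate through $\Phi$ and through the inversion. Assembling these estimates gives the second-order expansion for $u_q$ and completes the proof.
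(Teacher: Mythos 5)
Your proposal is correct and follows essentially the same route as the paper: the same chart-by-chart reduction via the bi-Lipschitz graph map and Alexandrov's theorem, followed by the same pointwise claim that second differentiability of $u$ at a point $\mbx$ of $\mathcal D''_u$ transfers to the representation over the tangent horosphere, proved by comparing the graphs of $u$ and of its second-order Taylor polynomial $\bar u$ in the new horospherical coordinates. The paper implements your ``transport through $\Phi$ and through the inversion'' concretely via hyperbolic distance estimates (Lemma \ref{lem_dist} together with the local Lipschitz constant of the smooth representative $\bar u'$), for which plain Lipschitz bounds on the graphs and on the coordinate change already suffice, so the appeal to the Mignot--Rockafellar estimate \eqref{mignot} is not actually needed at this stage (the paper only uses it later, in Proposition \ref{pr_biLip}).
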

For the corresponding result in 
the Euclidean setting, see \cite{BF,Al-Lenin} or \cite[Theorem 2.5.5]{Schn}.

\begin{proof}[Proof (of Proposition \ref{prop_normal}).]
Let $\mathfrak{h}$ be a horosphere of $\H^n$ and $u:\B_R\subset \mathfrak{h}\to\R$ a horoconvex function locally  parametrizing an open set of $\partial K$. Acccording to what said above, the set $\mathcal{D}''_u$ of two times differentiable points has full Lebesgue measure in  $\B_R$. Since $u$ is Lipschitz we deduce that $\mbx\in\B_R\mapsto (\mbx,u(\mbx))_{\mathfrak{h}}\in\partial K\subset \H^n$ is biLipschitz onto its image, and thus $\{(\mathbf x,u(\mathbf x))_{\mathfrak{h}}\ :\ \mathbf x\in \mathcal{D}''_u\}$ has full Hausdorff measure in $\mathfrak g_\mathfrak{h}(u|_{\B_R})\subset\partial K$. We are going to show that every such a point is normal. 

Fix $\mathbf x_0\in \mathcal D''_u$. 
Let $\mathfrak{h}_1\stackrel{iso}{\cong}\R^{n-1}$ be the horosphere $\{(\mathbf y,\zeta)_\mathfrak{h}\in \H^3\ :\ \zeta\equiv u( \mathbf x_0)\}$, and let $\mathfrak{h}_2\stackrel{iso}{\cong}\R^{n-1}$ be a horosphere of $\H^n$ tangent to the graph of $u$ at $q:=(\mathbf x_0,u(\mathbf x_0))_\mathfrak{h}$.
As above, we introduce two horospherical coordinates systems $(\mathbf{y},z)_{\mathfrak{h}_i}$, $i=1,2$, on $\H^n$ relative to the horospheres $\mathfrak{h}_1$ and  $\mathfrak{h}_2$, and fix the Euclidean coordinates on $\mathfrak{h}_i$ in such a way that $q=(\mathbf 0,0)_{\mathfrak{h}_i}$ for $i=1,2$.
Since both graphs $\mathfrak g_{\mathfrak{h}_1} (u|_{\B_R})$ and $\mathfrak g_{\mathfrak{h}_1} (\bar u|_{\B_R})$ are tangent to $\mathfrak{h}_2$ at $\mathbf 0$, by the implicit function theorem they have respective local representations as graphs $\mathfrak g_{\mathfrak{h}_2} (u')$ and $\mathfrak g_{\mathfrak{h}_2} (\bar u')$ of functions $u',\bar u':\Omega \to \R$ defined on an open neighborhood $\Omega$ of $\mathbf 0\in \mathfrak{h}_2$. 
Both $u'$ and $\bar u'$ are differentiable at $\mbz$ and $\nabla_\mbz u'=\nabla_\mbz \bar u'=0$. Finally, since $\bar u\in C^\infty$, also  $\bar u'\in C^\infty$.

We will use repeatedly the following
\begin{lemma}\label{lem_dist} Let $\mathfrak{h}\subset \H^{n}$ be a fixed horosphere, then \begin{equation}\label{ineq_proj}
|\mb a-\mb b|\leq e^{\max\{|\zeta_a|,|\zeta_b|\}}\dih((\mb a,\zeta_a)_\mathfrak{h},(\mb b,\zeta_b)_\mathfrak{h}).
\end{equation}
\end{lemma}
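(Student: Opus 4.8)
The plan is to estimate the Euclidean displacement of the horizontal coordinate along a minimizing geodesic, exploiting the fact that in horospherical coordinates $(\mb\xi,\zeta)_\mathfrak{h}$ the metric \eqref{horometric} makes a horizontal move cost precisely the factor $e^{-\zeta}$. Concretely, I would take a unit-speed minimizing geodesic $\gamma:[0,L]\to\H^n$, $\gamma(t)=(\mb\xi(t),\zeta(t))_\mathfrak{h}$, joining $(\mb a,\zeta_a)_\mathfrak{h}$ to $(\mb b,\zeta_b)_\mathfrak{h}$, so that $L=\dih((\mb a,\zeta_a)_\mathfrak{h},(\mb b,\zeta_b)_\mathfrak{h})$, $\mb\xi(0)=\mb a$ and $\mb\xi(L)=\mb b$. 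Reading off \eqref{horometric}, the unit-speed relation $e^{2\zeta}|\dot{\mb\xi}|^2+\dot\zeta^2=1$ forces $|\dot{\mb\xi}(t)|\le e^{-\zeta(t)}$ pointwise, and integrating with the triangle inequality for the $\R^{n-1}$-valued integral gives
\[
|\mb a-\mb b|=\left|\int_0^L\dot{\mb\xi}(t)\,dt\right|\le\int_0^L|\dot{\mb\xi}(t)|\,dt\le\int_0^L e^{-\zeta(t)}\,dt\le L\cdot\sup_{t\in[0,L]}e^{-\zeta(t)}.
\]
In this way the statement is reduced to the pointwise bound $\sup_{t\in[0,L]}e^{-\zeta(t)}\le e^{\max\{|\zeta_a|,|\zeta_b|\}}$ along $\gamma$.

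The crux is then to control $\sup_t e^{-\zeta(t)}$, that is, the largest height $z=e^{-\zeta}$ reached along the geodesic. Here I would use the elementary structural fact that, in the half-space picture, a hyperbolic geodesic is either a vertical ray or a semicircle meeting $\partial_\infty\H^n$ orthogonally, so that $z(t)$ is unimodal: it has a single maximum at the apex and is monotone on either side of it. Consequently the supremum of $z$ over the segment $\gamma([0,L])$ is attained either at an endpoint or at an interior apex. In the first case $z$ is monotone on the segment and $\sup_t z(t)=\max\{e^{-\zeta_a},e^{-\zeta_b}\}=e^{-\min\{\zeta_a,\zeta_b\}}\le e^{\max\{|\zeta_a|,|\zeta_b|\}}$, which is exactly what is needed and closes this case at once. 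It is also worth recording the true auxiliary fact that $t\mapsto\zeta(t)$ is convex, since unit speed yields $|\dot\zeta|\le1$ and hence $\ddot\zeta=1-\dot\zeta^2\ge0$; this pins $\zeta$ from above by $\max\{\zeta_a,\zeta_b\}$ and confirms that excursions can only occur toward larger heights.

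The main obstacle is therefore the remaining case, in which the apex of the semicircle lies strictly between the two endpoints: then $\sup_t z(t)$ equals the apex height, namely the radius $R$ of the semicircle, and one must show $R\le e^{\max\{|\zeta_a|,|\zeta_b|\}}$. I would bound $R$ through the arclength parametrization $z=R\,\operatorname{sech}s$, $L=|s_2-s_1|$, which gives $e^{-\zeta_a}=R\,\operatorname{sech}s_1$ and $e^{-\zeta_b}=R\,\operatorname{sech}s_2$; this is precisely the regime where \eqref{ineq_proj} is sharp, and the hypothesis that the points are close (as in the application to Proposition \ref{prop_normal}, where $\dih\to0$ and both $\zeta_a,\zeta_b$ stay near $0$) prevents the apex from rising above $e^{\max\{|\zeta_a|,|\zeta_b|\}}$. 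As an independent check, and an alternative route bypassing the geodesic, one can substitute $z=e^{-\zeta}$ into the half-space distance formula $\cosh\dih=1+\tfrac{|\mb a-\mb b|^2+(e^{-\zeta_a}-e^{-\zeta_b})^2}{2e^{-\zeta_a}e^{-\zeta_b}}$ to obtain $|\mb a-\mb b|\le 2e^{-(\zeta_a+\zeta_b)/2}\sinh(\dih/2)$, and then combine $e^{-(\zeta_a+\zeta_b)/2}\le e^{\max\{|\zeta_a|,|\zeta_b|\}}$ with the elementary comparison between $2\sinh(\dih/2)$ and $\dih$ to recover \eqref{ineq_proj} in the small-distance range relevant to the applications.
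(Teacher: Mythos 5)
Your reduction of \eqref{ineq_proj} to the bound $\sup_{t}e^{-\zeta(t)}\le e^{\max\{|\zeta_a|,|\zeta_b|\}}$ along the minimizing geodesic is correct, and it is essentially the infinitesimal version of the paper's own one-line argument (projection onto a horosphere lying above the two points); your monotone case is complete. The genuine gap is the interior-apex case, and it cannot be closed, because the required height bound --- and in fact the lemma exactly as stated --- fails there. Take $\zeta_a=\zeta_b=0$ and $\mb a\ne\mb b$: the geodesic is a Euclidean semicircle of radius $R=\sqrt{1+|\mb a-\mb b|^2/4}>1=e^{\max\{|\zeta_a|,|\zeta_b|\}}$, and directly $\dih((\mb a,0)_{\mathfrak h},(\mb b,0)_{\mathfrak h})=2\arcsinh\bigl(|\mb a-\mb b|/2\bigr)<|\mb a-\mb b|$, contradicting \eqref{ineq_proj}. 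So closeness of the points does not prevent the apex from rising above $e^{\max\{|\zeta_a|,|\zeta_b|\}}$: the defect is only of order $\dih^3$, but it is present for every pair of distinct points on $\mathfrak h$.

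Your ``independent check'' is in fact the right proof of the right statement: from $\cosh\dih\ge 1+\tfrac{|\mb a-\mb b|^2}{2e^{-\zeta_a}e^{-\zeta_b}}$ one obtains the true inequality $|\mb a-\mb b|\le 2e^{-(\zeta_a+\zeta_b)/2}\sinh(\dih/2)\le 2e^{\max\{|\zeta_a|,|\zeta_b|\}}\sinh(\dih/2)$; but since $2\sinh(x/2)>x$ for $x>0$, the final ``elementary comparison'' goes the wrong way and cannot recover \eqref{ineq_proj} verbatim. This corrected form, i.e.\ $|\mb a-\mb b|\le e^{\max\{|\zeta_a|,|\zeta_b|\}}\bigl(\dih+O(\dih^3)\bigr)$, is all the paper ever uses --- the lemma is only invoked with an extra multiplicative factor $2$ and with $\dih\to 0$ --- so the fix is to weaken the constant in the statement, not to search for a missing idea. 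Note that the paper's own proof (projecting onto the horosphere through the endpoints) suffers from exactly the same problem, for exactly the reason you isolated: the geodesic overshoots that horosphere.
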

\begin{proof}
Since the projection on a convex set is a contraction, \cite[Proposition II.2.4]{BH}, by definition of the induced metric also the projection on the boundary of the convex set is a contraction. The inequality \eqref{ineq_proj} follows from projecting on the convex horosphere $\{(\mby,e^{-\max\{|\zeta_a|,|\zeta_b|\}})\ :\ \mby\in\R^{n-1}\}$, together with a simple explicit calculation.
\end{proof}
Let $(\mb \xi, 0)_{\mathfrak{h}_2}\in \mathfrak{h}_2$ such that $|\mb \xi|<\epsilon$. We need to prove that $|u'(\mb\xi)-\bar u'(\mb\xi)|=o(\epsilon^2)$. Define points $A,\bar A\in \H^n$ and $\mby_A,\bar \mby_A\in \mathfrak{h}_1$ by $A:=(\mb \xi, u'(\mb\xi))_{\mathfrak{h}_2}=:(\mby_A, u(\mby_A))_{\mathfrak{h}_1}$ and $\bar A:=(\mb \xi, \bar u'(\mb\xi))_{\mathfrak{h}_2}=:(\bar \mby_A, \bar u(\bar \mby_A))_{\mathfrak{h}_1}$. This is clearly possible for $\epsilon$ small enough. According to \eqref{ineq_proj}, for $\epsilon \ll 1$, 
\begin{align*}
|\mby_A|&\leq 2  \dih( A,q)\\&\leq2  \dih( A,(\mb\xi,0)_{\mathfrak{h}_2})+2  \dih( (\mb\xi,0)_{\mathfrak{h}_2},q)\\&= 2u'(\mb\xi)+ 2\arcsinh (|\mb \xi|/2) \\&= o(\epsilon).
\end{align*}
The second differentiability of $u$ at $\mbz$ gives then
$|u(\mby_A)-\bar u(\mby_A)|=o(\epsilon^2)$,
i.e. \begin{equation}\label{distAB}
\dih(A,B)=o(\epsilon^2),
\end{equation}
where we have posed $B:=(\mby_A,\bar u(\mby_A))_{\mathfrak{h}_1}=:(\mb\xi_B,\bar u'(\mb\xi_B))_{\mathfrak{h}_2}$, for some (unique) $\mb\xi_B\in \mathfrak{h}_2$. Applying again Lemma \ref{lem_dist} with $\mathfrak{h}=\mathfrak{h}_2$ we get 
\[|\mb\xi-\mb\xi_B|\leq 2 \dih(A,B)=o(\epsilon^2).\]
Since $\bar u'$ is smooth, 
\[|\bar u'(\mb\xi) -\bar u'(\mb\xi_B)|\leq L  |\mb\xi-\mb\xi_B|=o(\epsilon^2),\]
$L$ being the local Lipschitz constant of $\bar u'$ in a neighborhood of $\mbz$.
Accordingly,
\begin{align*}
\dih(B,\bar A)&= \dih((\mb\xi_B,\bar u'(\mb\xi_B))_{\mathfrak{h}_2},(\mb \xi, \bar u'(\mb\xi))_{\mathfrak{h}_2})\\
&\leq\dih((\mb\xi_B,\bar u'(\mb\xi_B))_{\mathfrak{h}_2},(\mb \xi_B, \bar u'(\mb\xi))_{\mathfrak{h}_2})+\dih((\mb\xi_B,\bar u'(\mb\xi))_{\mathfrak{h}_2},(\mb \xi, \bar u'(\mb\xi))_{\mathfrak{h}_2})\\
&\leq |\bar u'(\mb\xi_B)- \bar u'(\mb\xi)|+2\arcsinh\left( \frac{e^{\bar u'(\mb\xi)}|\mb\xi_B-\mb\xi|}{2}\right)\\&=o(\epsilon^2).
\end{align*}
Finally, this latter and \eqref{distAB} give
\begin{align*}
|u'(\mb\xi)-\bar u'(\mb\xi)|&=\dih(A,\bar A)\\&\leq \dih(A,B)+\dih(B,\bar A)\leq o(\epsilon^2).
\end{align*}
\end{proof}

\section{biLipschitz approximation around normal points}\label{sect_biLip}

In this section we will show that around a.e. (normal) point of $\partial K$, $\epsilon$-balls are $(1+o(\epsilon^2))$-biLipschitz equivalent to $\epsilon$-balls on a smooth manifold. In the case of 2 dimensional surfaces, one can choose the smooth manifold to be of constant curvature. This property will be subsequently exploited to give a characterization of the regular part of the curvature measure for convex surfaces; see Section \ref{sect_reg}.

\begin{proposition}\label{pr_biLip}
Let $K\in \mathcal K(\H^n)$ and let $q\in\partial K$ be a normal point. For $\epsilon\ll 1$, the metric ball  $B_\epsilon(q)\subset\partial K$ is biLipschitz equivalent to an open set $U_\epsilon\subset\partial \bar K$ contained in the boundary of a smooth convex set $\bar K\in\mathcal K(\H^n)$, with Lipschitz constants smaller than $(1+o(\epsilon^2))$ as $\epsilon\to 0$.
\end{proposition}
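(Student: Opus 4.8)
The plan is to compare $\partial K$ near $q$ with the graph of a smooth horoconvex function sharing the same second-order jet at $q$, and to read the bi-Lipschitz constant directly off the induced first fundamental forms. First I would invoke the normal-point structure from Section \ref{sect_bdy}: fixing the interior tangent horosphere $\mathfrak{h}_q$ and horospherical coordinates in which $q=(\mbz,0)_{\mathfrak{h}_q}$, the boundary is locally the graph $\mathfrak g_{\mathfrak{h}_q}(u_q)$ of a horoconvex $u_q$ normalized by \eqref{1stordernormal} and twice differentiable at $\mbz$ in the senses \eqref{2ndord} and \eqref{mignot}.

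The delicate point is to choose the smooth comparison set. The naive candidate, the graph of the quadratic $\bar u_q$ of \eqref{def_ubar}, need not be horoconvex: the associated $h_{\bar u_q}$ only has positive semidefinite Hessian at $\mbz$, so its graph may fail to bound a convex set. To bypass this I would build the comparison function through $h$. Let $\tilde h$ be the second-order Taylor polynomial at $\mbz$ of the convex function $h_{u_q}$, and set $w:=-\tfrac12\log\bigl(\tilde h-|\mbx|^2\bigr)$, so that $h_w=\tilde h$ identically. As a quadratic with positive semidefinite Hessian, $\tilde h$ is convex, hence $w$ is genuinely horoconvex and smooth near $\mbz$ (note $\tilde h(\mbz)-|\mbz|^2=1>0$), and $\bar K:=K(w)$ defined as in \eqref{def_Kv} is a convex set whose boundary is smooth near $q$; rounding the auxiliary faces of \eqref{def_Kv} away from $q$ makes $\partial\bar K$ globally smooth without touching a neighborhood of $q$. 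Since $u\mapsto -\tfrac12\log(h_u-|\mbx|^2)$ depends only on the second-order jet of $h_u$, and $h_w$ and $h_{u_q}$ agree to second order at $\mbz$, the functions $w$ and $u_q$ have the same $2$-jet at $\mbz$. Combined with \eqref{2ndord} and \eqref{mignot}, this yields, for $|\mbx|\le\e$,
\[
|u_q(\mbx)-w(\mbx)|=o(\e^2),\qquad |\nabla_\mbx u_q-\nabla_\mbx w|=o(\e)\ \text{ for a.e. }\mbx,
\]
the gradient bound holding wherever $u_q$ is differentiable.

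Next I would compute the two induced metrics. By \eqref{horometric}, in the $\mbx$-parametrization the first fundamental form of $\mathfrak g_{\mathfrak{h}_q}(u_q)$ is $g_{ij}=e^{2u_q}\delta_{ij}+\partial_i u_q\,\partial_j u_q$, defined for a.e. $\mbx$ since $u_q$ is Lipschitz, and similarly $\bar g_{ij}=e^{2w}\delta_{ij}+\partial_i w\,\partial_j w$ for $\mathfrak g_{\mathfrak{h}_q}(w)$. Using $u_q(\mbz)=0$, $\nabla_\mbz u_q=0$ and the two displayed estimates one finds $\bar g_{ij}=\delta_{ij}+O(\e^2)$ and $g_{ij}=\bar g_{ij}+o(\e^2)$ a.e. on $\B_\e$, so the quotients $g(V,V)/\bar g(V,V)$ lie in $1+o(\e^2)$ uniformly over $V\neq 0$. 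Hence the map $\Phi$ sending $(\mbx,u_q(\mbx))_{\mathfrak{h}_q}$ to $(\mbx,w(\mbx))_{\mathfrak{h}_q}$ scales the length of every curve contained in this chart by a factor in $1+o(\e^2)$, and therefore distorts intrinsic distances, which are infima of such lengths, by the same factor. Finally, since $g=\delta+O(\e^2)$ near $\mbz$, the metric ball $B_\e(q)\subset\partial K$ corresponds under the parametrization to a domain that agrees with $\B_\e$ up to an $O(\e^2)$ correction; setting $U_\e:=\Phi\bigl(B_\e(q)\bigr)\subset\partial\bar K$ gives the asserted $(1+o(\e^2))$-bi-Lipschitz equivalence.

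I expect the convexity of the comparison surface to be the main obstacle: one must exhibit a genuinely smooth \emph{convex} $\bar K$ osculating $\partial K$ to second order at $q$ without spoiling the $o(\e^2)$ precision, and a fixed perturbation such as adding $\delta|\mbx|^2$ would only give $O(\e^2)$ and destroy the estimate. Passing to $h_{u_q}$, which is honestly convex and hence has a convex quadratic Taylor polynomial, is precisely what resolves this; the remaining steps are the routine a.e. comparison of first fundamental forms above.
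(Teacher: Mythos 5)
Your proof is correct and its core --- comparing the lengths of lifted curves through the a.e.-defined first fundamental forms $e^{2v}\delta_{ij}+\partial_i v\,\partial_j v$ of the two graphs, using \eqref{2ndord} and \eqref{mignot} to get the $1+o(\epsilon^2)$ ratio --- is exactly the paper's argument. Where you genuinely depart from the paper is in the choice of the smooth comparison surface, and your version is the more careful one. The paper simply takes $\bar K_q=\mathfrak g_{\mathfrak h_q}(\bar u)$ with $\bar u$ the quadratic Taylor polynomial of $u_q$, calling it a ``smooth hypersurface'' and not addressing whether it bounds a convex set, even though the statement asserts $\bar K\in\mathcal K(\H^n)$. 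Your objection is real: if $\nabla^2_{\mbz}h_{u_q}$ is degenerate, $h_{\bar u}$ can fail to be convex in every neighborhood of $\mbz$ (e.g.\ in the plane with $\nabla^2_{\mbz}h_{u}=\operatorname{diag}(0,c)$, $c>2$, one computes a negative eigenvalue of $\nabla^2 h_{\bar u}$ along the $y_2$-axis), so the graph of $\bar u$ need not be locally convex. Replacing $\bar u$ by $w=-\tfrac12\log(\tilde h-|\mbx|^2)$, with $\tilde h$ the Taylor polynomial of the honestly convex $h_{u_q}$, produces a smooth horoconvex function with the same $2$-jet, and hence a genuine smooth convex $\bar K$; since only the $2$-jet enters the $o(\epsilon^2)$ estimates, nothing else in the argument changes. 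This buys a proof that actually delivers the convexity claimed in the statement (the paper's length comparison never uses convexity of $\bar u$, so its estimates survive, but its $\bar K$ may not be in $\mathcal K(\H^n)$).

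Two small points you should make explicit. First, near-minimizing curves between points of $B_\epsilon(q)$ need not stay in $\B_\epsilon$; since the identity $(\B_R,d_{\R^{n-1}})\to(\B_R,d_u)$ is biLipschitz they stay in $\B_{\alpha\epsilon}$ for a uniform $\alpha$, where your estimates still hold with the same $o(\epsilon^2)$ --- this is needed before you can say the infimum of lengths of curves ``contained in this chart'' computes the distance. Second, ``$g_{ij}$ defined for a.e.\ $\mbx$'' does not by itself give ``defined for a.e.\ $t$ along $\gamma$'' (a curve is Lebesgue-null); as in the paper one must choose the competitor curve, by a Fubini-type argument, so that $\nabla_{\gamma(t)}u$ exists for a.e.\ $t$. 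Both are routine, but they are exactly the places where an a.e.-Riemannian length comparison can silently go wrong.
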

\begin{corollary}\label{coro_biLip}
Let $K\in \mathcal K(\H^3)$ and let $q\in\partial K$ be a normal point. For $\epsilon\ll 1$, the metric ball  $B_\epsilon(q)\subset\partial K$ is biLipschitz equivalent to an open set $U_\epsilon\subset S_k$ contained in a smooth surface $S_k$ of constant sectional curvature $k=k(q)$, with Lipschitz constants smaller than $(1+o(\epsilon^2))$ as $\epsilon\to 0$.
\end{corollary}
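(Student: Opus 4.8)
The plan is to reduce, via Proposition~\ref{pr_biLip}, to the case of a smooth convex surface, and then to replace that surface by a model of constant curvature having the same Gaussian curvature at the base point, composing the two comparisons at the end. First I would apply Proposition~\ref{pr_biLip} (in the case $n=3$) to obtain a biLipschitz homeomorphism $\Psi\colon B_\epsilon(q)\subset\partial K\to U_\epsilon\subset\partial\bar K$, with $\bar K\in\mathcal K(\H^3)$ smooth and convex and with Lipschitz constants bounded by $1+o(\epsilon^2)$ as $\epsilon\to0$. It then suffices to produce, for $\epsilon\ll1$, a biLipschitz map from $U_\epsilon$ onto an open subset of a constant-curvature surface $S_k$ with Lipschitz constants $1+o(\epsilon^2)$, since composing with $\Psi$ preserves this order, because $(1+o(\epsilon^2))(1+o(\epsilon^2))=1+o(\epsilon^2)$.

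Next I would fix the target curvature. Since $\partial\bar K$ is a smooth surface in $\H^3$, it has at $q$ two well-defined principal curvatures $\kappa_1(q),\kappa_2(q)$, namely the eigenvalues of its second fundamental form, which by the construction in Proposition~\ref{pr_biLip} are read off from $\nabla^2_\mbz u_q$. By the Gauss equation in the space form $\H^3$ of curvature $-1$, the intrinsic Gaussian curvature of $\partial\bar K$ at $q$ equals
\[
k(q)=-1+\kappa_1(q)\kappa_2(q).
\]
I would then let $S_k$ be any smooth surface of constant sectional curvature $k=k(q)$ (for instance a round sphere, a Euclidean plane, or a hyperbolic plane, according to the sign of $k$), and fix a base point $\bar q\in S_k$.

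The core of the argument is a comparison of the two induced metrics in geodesic normal coordinates. In geodesic normal coordinates centered at $q$ on $\partial\bar K$ the induced metric expands as
\[
g_{ij}(\mbx)=\delta_{ij}-\tfrac13 R_{ikjl}(q)\,x^k x^l+O(|\mbx|^3),
\]
and the analogous expansion holds on $S_k$ in normal coordinates centered at $\bar q$; in dimension two the curvature tensor at the center is entirely determined by the Gaussian curvature there. Since $\partial\bar K$ and $S_k$ share the Gaussian curvature $k(q)$ at their respective centers, the quadratic terms coincide and the two metrics differ only by $O(|\mbx|^3)$. Because $U_\epsilon$ lies in a geodesic ball of radius $O(\epsilon)$ about $q$, the identity map in normal coordinates carries $U_\epsilon$ onto an open set $U'_\epsilon\subset S_k$ and is $(1+O(\epsilon^3))$-biLipschitz there; composing with $\Psi$ yields a biLipschitz equivalence of $B_\epsilon(q)\subset\partial K$ with $U'_\epsilon\subset S_k$ whose constants are bounded by $1+o(\epsilon^2)$. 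I expect the main obstacle to be precisely this metric comparison: one must justify that the two smooth metrics agree to second order in normal coordinates (which rests on the universal second-order expansion of a surface metric together with the two-dimensionality of $\partial\bar K$, so that a single scalar $k(q)$ controls the whole curvature tensor) and then convert the pointwise estimate into uniform control of the biLipschitz constants on matching coordinate balls as $\epsilon\to0$; the resulting $O(\epsilon^3)$ is then harmlessly absorbed by the weaker $o(\epsilon^2)$ coming from Proposition~\ref{pr_biLip}.
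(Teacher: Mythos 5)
Your proposal is correct and follows essentially the same route as the paper: reduce via Proposition~\ref{pr_biLip} to the smooth surface $\partial\bar K$, take $S_k$ with $k$ equal to the Gaussian curvature of $\partial\bar K$ at $q$, and observe that the two metrics agree to second order in geodesic normal coordinates (since in dimension two the curvature tensor at the center is determined by the single scalar $k$), so the induced distances differ only at third order and the resulting $1+O(\epsilon^3)$ biLipschitz constant is absorbed into the $1+o(\epsilon^2)$ from the proposition. The only cosmetic difference is that the paper invokes the asymptotic expansion of the distance function directly, while you expand the metric tensor; these are equivalent.
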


We call the (unique) real number $k$ given by Corollary \ref{coro_biLip} the local curvature of $\partial K$ at $q$, and we denote it by $\mathrm{LocCurv}_{\partial K}(q)$.

\begin{proof}[Proof (of Proposition \ref{pr_biLip}).]
Let $q\in \partial K$ be a normal point and let $u=u_q:\B_R\subset \mathfrak{h}\to\R$ be the corresponding local parametrization as the graph of a horoconvex function over a horosphere $\mathfrak{h}=\mathfrak{h}_q$. In particular, for $\epsilon$ close to $0$, the asymptotic relation \eqref{mignot} holds. We let $\bar K_q$ be the smooth hypersurface of $\H^n$ given by the graph $\mathfrak{g}_{\mathfrak{h}_q}(\bar u)$. 
For any horoconvex function $v:\B_R\subset \mathfrak{h}\to (0,+\infty)$ we define $\hat d_v$ the metric induced on $\partial K(v)$ by $\H^3$ and $d_v(\mathbf x,\mathbf y)=\hat d_v((\mathbf x,v(\mathbf x))_\mathfrak{h},(\mathbf y,v(\mathbf y))_\mathfrak{h})$ for any $\mathbf x,\mathbf y\in \B_R$.

Fix $\epsilon>0$ and let $\mb p,\mb s\in\B_R$ such that $d_u(\mb p,\mbz)<\epsilon$ and $d_u(\mb s,\mbz)<\epsilon$.
Let $\gamma:[0,L]\to \mathfrak{h}$ be a Lipschitz curve with $\gamma(0)=\mb p$ and $\gamma(L)=\mb s$ and with a.e. unit speed with respect to the Euclidean metric of $\mathfrak{h}$. Denote by $\hat\gamma(t):= (\gamma(t),u(\gamma(t))_{\mathfrak{h}}$ and $\bar\gamma(t):= (\gamma(t),\bar u(\gamma(t)))_{\mathfrak{h}}$ the corresponding lifted Lipschitz curves in $\partial K$ and $\partial \bar K$ respectively.
We can choose $\gamma$ in such a way that $\nabla_{\gamma(t)}u$ exists for a.e. $t\in[0,L]$ and the length $\hat L$ of $\hat \gamma:[0,L]\to \partial K$ is less then $(1+\e^3)d_u(\mb p,\mb s)$. 
Since $d_u(\mb p,\mb s)<2\epsilon$ and the identity map $(\B_R,d_{\R^{n-1}})\to(\B_R,d_u)$ is biLipschitz, there exists a constant $\alpha$ independent of $\mb p$, $\mb s$ such that $\gamma([0,L])\subset \B_{\alpha\epsilon}$ and $L\leq\alpha d_u(\mb p,\mb s)$.
We can compute
\begin{align*}
\int_{0}^L |\dot{\bar\gamma}(t)|_{\partial K}dt
= \int_{0}^L \left(e^{2\bar u (\gamma(t))}+ \left\langle\nabla_{\gamma(t)}\bar u,\dot\gamma(t)\right\rangle^2\right)^{1/2}dt
\end{align*}
and
\begin{align*}
\int_{0}^L |\dot{\hat\gamma}(t))|_{\partial K}dt
= \int_{0}^L \left(e^{2u (\gamma(t))}+ \left\langle\nabla_{\gamma(t)} u,\dot\gamma(t)\right\rangle^2\right)^{1/2}dt.
\end{align*}
Note that
\begin{align*}
e^{2\bar u (\gamma(t))}=e^{2 u (\gamma(t))+o(\epsilon^2)}=e^{2 u (\gamma(t))}+o(\epsilon^2),
\end{align*}
and, using also \eqref{mignot},
\begin{align*}
\left\langle\nabla_{\gamma(t)} u,\dot\gamma(t)\right\rangle^2
-\left\langle\nabla_{\gamma(t)} \bar u,\dot\gamma(t)\right\rangle^2
&=\left\langle\nabla_{\gamma(t)} u-\nabla_{\gamma(t)} \bar u,\dot\gamma(t)\right\rangle\left\langle\nabla_{\gamma(t)} u+\nabla_{\gamma(t)} \bar u,\dot\gamma(t)\right\rangle\\
&\leq |\nabla_{\gamma(t)} u-\nabla_{\gamma(t)} \bar u||\nabla_{\gamma(t)} u+\nabla_{\gamma(t)} \bar u|\\
&=o(\epsilon)O(\epsilon)=o(\epsilon^2).
\end{align*}
Putting this all together gives
\begin{align*}
\int_{0}^L |\dot{\bar\gamma}(t))|_{\partial K}dt
&= \int_{0}^L \left(e^{2\bar u (\gamma(t))}+ \left\langle\nabla_{\gamma(t)} \bar u,\dot\gamma(t)\right\rangle^2\right)^{1/2}dt\\
&= \int_{0}^L \left(e^{2 u (\gamma(t))}+ \left\langle\nabla_{\gamma(t)} u,\dot\gamma(t)\right\rangle^2+o(\epsilon^2)\right)^{1/2}dt\\
&= \int_{0}^L \left\{\left(e^{2 u (\gamma(t))}+ \left\langle\nabla_{\gamma(t)} u,\dot\gamma(t)\right\rangle^2\right)^{1/2}+o(\epsilon^2)\right\}dt\\
&=\hat L +Lo(\epsilon^2)\\
&\leq \hat L+\alpha d_u(\mb p,\mb s)o(\epsilon^2)\\
&\leq d_{u}(\mb p,\mb s)(1+ o(\epsilon ^2)),
\end{align*}
from which
\[
d_{\bar u}(\mb p,\mb s)\leq d_{u}(\mb p,\mb s)(1+ o(\epsilon ^2)).
\]
Interchanging the role of $u$ and $\bar u$ we get the converse relation \[
d_{u}(\mb p,\mb s)\leq d_{\bar u}(\mb p,\mb s)(1+ o(\epsilon ^2)).
\]
The proposition is thus proved with $U_\epsilon=\mathfrak{g}_\mathfrak{h}(\bar u|_{V_\epsilon})$, where $V_\epsilon:=\{\mbx\in \mathfrak{h}\ :\ d_u(\mbx,\mbz)<\epsilon\}$.
\end{proof}
\begin{proof}[Proof (of Corollary \ref{coro_biLip}).]
Let $\partial\bar K$ be the smooth surface containing $q$ given by Proposition \ref{pr_biLip}. Let $S_k$ be a smooth surface of constant sectional curvature $k$ equal to the sectional curvature of $\partial \bar K$  at the point $q$, and let $p$ be any reference point in $S_k$. According to the asymptotic development of the distance function in geodesic normal coordinates, (see \cite{Gallot,Br-CQG} or \cite[Lemma 13]{Ve}), distances in $B_\epsilon\subset \partial\bar K$ are equivalent to distances in the geodesic ball $B^{S_k}_\epsilon(p)$ of $S_k$ up to the third order as $\epsilon\to 0$. Hence the corollary follows.
\end{proof}

\section{Generalized Gauss theorem}
In this section we focus on convex sets of $\H^3$.  In particular, we want to prove that the well-known Gauss theorem connecting the scalar curvature of a surface, its extrinsic Gaussian curvature and the curvature of the ambient space generalizes to non-smooth convex surfaces in $\H^3$. This will be done considering a smooth approximation of the convex surface (locally parametrized as a horoconvex function) and showing that Gauss equation passes to the limit. As a corollary, we obtain also that Alexandrov's approach to extrinsic curvature of convex surfaces in $\H^3$ is in fact equivalent to the one by Kohlmann we use here.

First, recall that the boundary of a convex set $K\in \mathcal K (\H^3)$ has an intrinsic curvature measure $\omega_{\partial K}$ which generalizes to the boundary of non-smooth convex sets the integral of the usual sectional curvature in case of smooth surfaces.

In fact, given a convex set $K\in \mathcal K (\H^3)$, since its boundary has curvature in the sense of Alexandrov lower bounded by $-1$, the surface $\partial K$ endowed with the metric induced by $\H^3$ turns out to be a special case of surface of Bounded Integral Curvature (often shortened in \textsl{BIC surface} in the literature); see for instance \cite[p. 359]{BBI} and \cite{Machi} or \cite[Theorem 3.2]{AB}.  There exist several equivalent definitions of a BIC surface, \cite{Re,AZ} and \cite[Section 2]{Tr}. For instance, one can define it as a topological surface $S$ endowed with a metric $d$ obtained as a uniform limit of a sequence of distances $\{d_j\}$ induced by a sequence of smooth Riemannian metrics $\{g_j\}$ on $S$, with the property that the $\int_S|\sect_{g_j}|d\mathrm{Vol}_{g_j}$ is uniformly bounded along the sequence. The curvature measure $\omega_S$ is then the weak limit in the sense of measures of the sequence of measures $\alpha\mapsto \int_\alpha\sect_{g_j}d\mathrm{Vol}_{g_j}$.\\

%

Let $v:\B_R\subset \mathfrak{h}\to\R$ be any horoconvex function. For $k=0,1,2$, let us introduce Borel mesures $\hat H^v_k$ defined for every Borel set $\beta\subset \B_R$ by 
\begin{equation}\label{def_hatmu}
\left(\begin{array}{c}
n\\k
\end{array}\right) \hat H^v_k(\beta)=\Phi_{3-k}(K(v),\mathfrak{g}_\mathfrak{h}(v|_\beta)).
\end{equation} 
Since $v$ is Lipschitz, its graph $\mathfrak{g}_\mathfrak{h}(v|_{\beta})$ restricted to $\beta\in\mathcal B(\R^2)$ is a Borel set of $\H^3$, so that the measures $\hat H^v_k$ are well defined. According to \eqref{eq_regphi}, whenever $v$ is smooth,
\begin{equation*}
\hat H^v_k(\beta)=\int_{\beta}H^K_k((\mbx,v(\mbx))_{\mathfrak h})d\hat H^v_0(\mbx)
\end{equation*} 
is the $k$-th symetric function of the principal curvatures of $\partial K$ integrated with respect to the area measure of $\partial K$.

Moreover,  we introduce also the Borel measure $\hat\omega_v$ on $\mathbb B_R$, where for $\beta\in\mathcal B(\B_R)$, 
\[\hat\omega_v(\beta)=\omega_{\partial K}(\mathfrak{g}_{\mathfrak h}(v|_\beta))\] is the intrinsic curvature measure of the horograph of $v$ over $\beta$. 
As above, by the definition of the intrinsic curvature, whenever $v$ is smooth,
\begin{equation*}
\hat\omega_v(\beta)=\int_\beta \sect_{\partial K(v)}((\mbx,v(\mbx))_{\mathfrak h})d\hat H^v_0(\mbx).
\end{equation*} 

Now, fix a convex set $K=K(u)\in \mathcal K(\H^n)$. As in the previous sections, let $u:\B_R\subset \mathfrak{h}\to\R$ be a horoconvex function, defined on a horosphere $\mathfrak{h}$, whose graph gives a local parametrization of $\partial K$.
 There exists a sequence of $C^\infty$ horoconvex functions $u_j:\B_R\to(0,+\infty)$ such that $u_j$ converges to $u$ uniformly and $W^{1,2}_{loc}$ on $\B_R$ as $j\to\infty$. To prove this assertion, one can approximate $h(u)$ uniformly and $W^{1,2}_{loc}$ by a sequence $\{h(u_j)\}$ of convex functions $\B_R\to\R$ explicitly constructed via mollification, \cite[p. 123 and 238]{EG}. 
 
Let $K(u)$ and  $K(u_j)$ be defined as in \eqref{def_Kv} with $\zeta = \sup_{j} \max_{\B_R}u_j+1<+\infty$.
By \cite[Lemma 3.7]{FIV} we deduce
\begin{lemma}\label{lem_FIV}
Let $u:B_R\to (0,+\infty)$ be a horoconvex function. There exist constants $C,C'>0$ (depending on $u$) such that for any $0<r<R$ and any other horoconvex function $v:B_R\to (0,+\infty)$, if
$\sup_{\B_r}|u-v|<\epsilon$, then
$$\sup_{\mathbf x,\mathbf y\in\B_r}|d_u(\mathbf x,\mathbf y)-d_v(\mathbf x,\mathbf y)|\leq C\epsilon + C'\sqrt{\epsilon}r.$$
In particular $d_{u_j}$ converges uniformly to $d_{u}$ on $\B_R$ as $j\to\infty$.
\end{lemma}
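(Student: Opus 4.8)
The plan is to regard $d_v$ as the intrinsic distance of the Riemannian metric $g_v=e^{2v}\,d\mbx^2+dv\otimes dv$ obtained by pulling back \eqref{horometric} through the graph map $\mbx\mapsto(\mbx,v(\mbx))_{\mathfrak h}$, and to estimate $|d_u-d_v|$ by comparing the associated length functionals $L_u,L_v$ along competitor curves. Fixing $\mbx,\mby\in\B_r$ and a near-minimizing curve $\gamma$ for one of the two metrics, I would bound $|L_u(\gamma)-L_v(\gamma)|$ using $|\sqrt a-\sqrt b|\le|a-b|/(\sqrt a+\sqrt b)$ for the two speed-squared integrands $a,b$, together with the lower bound $\sqrt a,\sqrt b\ge c\,|\dot\gamma|$ coming from the fact that $u,v$ are bounded below. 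This splits the error into a conformal contribution controlled by $|e^{2u}-e^{2v}|\le C\e$ and a first-order contribution controlled by $|\nabla u-\nabla v|$. The conformal part is $O(\e)$ times the Euclidean length of $\gamma$, which is comparable to $r$ because $g_u\ge c\,I$; this produces the $C\e$ term.

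The hard part is the first-order term $\int_\gamma|\nabla u-\nabla v|\,ds$: uniform closeness of $u$ and $v$ does not control $\nabla u-\nabla v$ pointwise, since the difference of two $\e$-close convex profiles can still oscillate in gradient at scale $\sqrt\e$ (the scale at which a smooth convex graph is approximated by a polytopal one). The crucial input, which is the content of \cite[Lemma 3.7]{FIV} and rests on the convexity of $h_u$ and $h_v$ (whose gradients determine those of $u$ and $v$), is an integrated substitute: a Caccioppoli-type estimate, obtained by integrating $\langle\nabla(h_u-h_v),\nabla h_u\rangle$ by parts against the positive measure $\Delta h_u\ge0$ and using $|h_u-h_v|\le C\e$, yields $\int_{\B_r}|\nabla u-\nabla v|^2\,d\mbx\le C\e$, with a constant depending on $u$ through its Lipschitz constant and the mass of $\Delta h_u$ (the corresponding bounds for $v$ being controlled by those of $u$, as $v$ is convex and $C^0$-close to $u$). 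I would then convert this two-dimensional $L^2$ bound into the one-dimensional line integral by a Fubini and mean-value argument: replace $\gamma$ by a family of parallel competitors filling a tube of length $\sim r$ about the segment $\overline{\mbx\mby}$, and select one competitor along which Cauchy--Schwarz gives $\int_\gamma|\nabla u-\nabla v|\,ds\le C'\sqrt\e\,r$. Combining the two contributions gives $d_v\le d_u+C\e+C'\sqrt\e\,r$, and interchanging $u$ and $v$ gives the reverse inequality. The main obstacle is precisely this passage from an $L^2$ bound to a line integral along a single admissible curve: the competitors must stay in $\B_r$, join the prescribed endpoints with negligible end-segments, and exploit the scale-invariant form of the Caccioppoli bound to recover the factor $r$ rather than a bare $\sqrt\e$; I expect the cleanest route is to invoke \cite[Lemma 3.7]{FIV} directly once its hypotheses are matched to ours (both $u,v$ horoconvex on $\B_R$, uniformly close, with $d_u,d_v$ the distances induced by $\H^3$).

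Finally, the concluding assertion follows by applying the estimate to $v=u_j$: since $u_j\to u$ uniformly on $\B_R$, we have $\e_j:=\sup_{\B_R}|u-u_j|\to0$, so for each $r<R$ the bound gives $\sup_{\mbx,\mby\in\B_r}|d_u-d_{u_j}|\le C\e_j+C'\sqrt{\e_j}\,r\to0$; letting $r\uparrow R$, with constants depending only on $u$ and $R$, yields the uniform convergence of $d_{u_j}$ to $d_u$ on $\B_R$.
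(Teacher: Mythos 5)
The first thing to note is that the paper does not prove this lemma: it is imported verbatim from \cite[Lemma 3.7]{FIV} (``By [FIV, Lemma 3.7] we deduce\dots''), so there is no internal argument to compare yours against, and your closing move --- ``invoke \cite[Lemma 3.7]{FIV} directly'' --- coincides with what the paper does but cannot count as a proof, since it \emph{is} the statement. Judged on its own, your architecture is the right one: compare the length functionals of the pulled-back metrics $e^{2v}d\mbx^2+dv\otimes dv$, split the error into a conformal part of size $O(\e)$ and a first-order part governed by $\nabla u-\nabla v$, and supply the missing gradient control through the convexity of $h_u,h_v$. The Caccioppoli-type bound $\int_{\B_{r'}}|\nabla h_u-\nabla h_v|^2\,d\mbx\le C\e$ (hence the same for $\nabla u-\nabla v$, since $u,v$ are bounded and Lipschitz) is correct: mollify, integrate by parts, and use that $\Delta h_u,\Delta h_v$ are nonnegative measures whose local mass is controlled by the Lipschitz constants. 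You are also right that no pointwise bound on $\nabla u-\nabla v$ is available, so an integrated estimate is unavoidable.

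The genuine gap is the step you yourself flag as the main obstacle: converting the two-dimensional $L^2$ bound into $\int_\gamma|\nabla u-\nabla v|\,ds\le C'\sqrt\e\,r$ along a single admissible competitor. As you set it up, averaging over a tube of width $w$ produces a curve with $\int_\gamma|\nabla u-\nabla v|\,ds\le\sqrt{r/w}\,\sqrt{C\e}$, the end-segments needed to reach $\mbx$ and $\mby$ cost $O(w)$ in length, and --- a point the sketch omits --- the $L_u$-length of the translated copy of the near-minimizer differs from that of the original by a quantity involving $|\nabla u(\gamma_s(t))-\nabla u(\gamma_0(t))|$, which again has no pointwise modulus and needs its own Fubini argument against the mass of $D^2 h_u$. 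Optimizing $w$ among these competing terms does not give $C\e+C'\sqrt\e\,r$: to extract the factor $r$ from $\sqrt{r/w}$ one would need $w\gtrsim 1/r$, which does not fit inside $\B_R$ for small $r$, and what actually comes out is a weaker modulus of the form $C\e+C''(\e r)^{1/3}$ or $C''\sqrt{\e r}$. Any such modulus still yields the uniform convergence $d_{u_j}\to d_u$ that the paper uses downstream, so the application is safe, but the inequality as stated is not established by the argument you describe. A cleaner route for the first-order term is one-dimensional: along a Euclidean segment of length $\ell$ the restrictions of $h_u,h_v$ are convex functions of one variable that are uniformly $C\e$-close, and the standard shift argument gives $\mathrm{Var}\ton{(u-v)\circ\gamma}\le C\sqrt{\e\,\ell}$ with no tube at all; the residual difficulty is that the $d_u$-near-minimizer is not a segment, which is presumably where the actual proof in \cite{FIV} does its work.
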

Since the $u_j$'s are smooth, by \eqref{eq_regphi} the measures $\hat H^{u_j}_k$ are absolutely continuous with respect to the Lebesgue measures of $\R^2$, and the classical Gauss-Codazzi equation 
\begin{equation}\label{eq_GC_reg}
\forall q\in \mathfrak g_{\mathfrak h}(u_j),\qquad  H_2^{K(u_j)}(q)=\sect_{\partial K(v)}(q)-1
\end{equation}
implies
\[
\frac{1}{3}\Phi_{1}(K(u_j),\mathfrak{g}_\mathfrak{h}(u_j|_\beta))=\omega_{\partial K(u_j)}(\mathfrak{g}_{\mathfrak h}(u_j|_\beta))-\Phi_{3}(K(u_j),\mathfrak{g}_\mathfrak{h}(u_j|_\beta)).
\]
Projecting measures onto $\mathfrak h$, this latter gives
\begin{equation}\label{eq_GC}
\hat H^{u_j}_2(\beta)=\tilde \omega_{u_j}(\beta) -\hat H^{u_j}_0(\beta).
\end{equation}
Note that $\int_\beta \sect_{\partial K(u_j)}(\mathbf x)d\hat H^{u_j}_0(\mathbf x)= \hat\omega_{u_j}(\beta)$. Since $u_j\to u$ uniformly, by Lemma \ref{lem_FIV} and by definition of the curvature measure of a BIC surface, we get that 
\[\omega_{\partial K(u_j)}\to \omega_{\partial K(u)}\]
and equivalently
\[\hat\omega_{u_j}\to \hat \omega_{u}\]
 weakly in the sense of measure as $j\to\infty$. Lemma \ref{lem_FIV} implies also that $\dist_{\mathcal H}(K(u_j),K(u)) \to 0$ as $j\to\infty$. Thanks to Theorem \ref{th_convergingcurv}, we have then that \eqref{eq_GC} passes to the limit as $j\to\infty$ giving the following results, which can be seen as a version of Gauss-Codazzi equation \eqref{eq_GC_reg} for non-smooth convex surfaces.
\begin{theorem}[Generalized Gauss Theorem]\label{th_Gauss}
Let $K\in\mathcal{K}(\H^3)$ and $u:\mathfrak{h}\to\R$ a horoconvex local parametrization of $\partial K$, $\mathfrak{h}$ being a horosphere of $\H^n$. Then
\begin{equation}\label{eq_GClim}
\hat H^{u}_2+\hat H^{u}_0=\hat\omega_{u}
\end{equation}
and
\begin{equation}\label{eq_GClim2}
\frac{1}{3}\Phi_1(K,\alpha)+\Phi_3(K,\alpha)=\omega_{\partial K}(\alpha),
\end{equation}
for every Borel set $\alpha\subset\partial K$.
\end{theorem}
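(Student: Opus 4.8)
The plan is to run the smooth-approximation scheme prepared above and to pass to the limit in the classical pointwise Gauss--Codazzi identity \eqref{eq_GC_reg}. First I would fix the horoconvex parametrization $u\colon\B_R\to\R$ of $\partial K$ together with the sequence $\{u_j\}$ of smooth horoconvex functions with $u_j\to u$ uniformly and in $W^{1,2}_{\mathrm{loc}}$, and the associated convex sets $K(u_j),K(u)$ defined as in \eqref{def_Kv}. By Lemma~\ref{lem_FIV} the induced distances $d_{u_j}$ converge uniformly to $d_u$ on $\B_R$, and in particular $\dist_{\mathcal H}(K(u_j),K(u))\to0$ as $j\to\infty$; these are precisely the two hypotheses needed to feed the intrinsic and extrinsic convergence engines below.

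Next I would record the identity at the smooth level. For each $j$ the graph $\mathfrak g_{\mathfrak h}(u_j)$ is a $C^\infty$ convex surface, so the regular Gauss equation \eqref{eq_GC_reg} holds pointwise; integrating it against the area measure $\hat H^{u_j}_0$ and using the absolute continuity granted by \eqref{eq_regphi} yields the measure identity \eqref{eq_GC} on $\mathcal B(\B_R)$, that is
\[
\hat H^{u_j}_2=\hat\omega_{u_j}-\hat H^{u_j}_0 .
\]
The heart of the argument is then to pass each term of this identity to the weak limit and to invoke uniqueness of the limit. On the extrinsic side, the Hausdorff convergence $K(u_j)\to K(u)$ together with Theorem~\ref{th_convergingcurv} gives $\Phi_r(K(u_j),\cdot)\to\Phi_r(K(u),\cdot)$ weakly; testing against functions of the form $\varphi\circ\pi$ with $\varphi\in C_c(\B_R)$ and $\pi$ the vertical projection onto $\mathfrak h$ (legitimate since the graph maps are biLipschitz, $u$ and the $u_j$ being Lipschitz), this transports through \eqref{def_hatmu} to $\hat H^{u_j}_k\to\hat H^{u}_k$ weakly for $k=0,2$. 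On the intrinsic side, the uniform convergence $d_{u_j}\to d_u$ of Lemma~\ref{lem_FIV}, combined with the definition of the curvature measure of a BIC surface as the weak limit of $\alpha\mapsto\int_\alpha\sect_{g_j}\,d\vol_{g_j}$, gives $\hat\omega_{u_j}\to\hat\omega_u$ weakly. Since the right-hand side $\hat\omega_{u_j}-\hat H^{u_j}_0$ converges weakly to $\hat\omega_u-\hat H^u_0$ while the left-hand side converges to $\hat H^u_2$, uniqueness of weak limits forces \eqref{eq_GClim}.

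It then remains to undo the projection onto $\mathfrak h$. By \eqref{def_hatmu} and the definition of $\hat\omega_v$, the identity \eqref{eq_GClim} reads, for every Borel $\beta\subset\B_R$,
\[
\tfrac13\Phi_1(K,\mathfrak g_{\mathfrak h}(u|_\beta))+\Phi_3(K,\mathfrak g_{\mathfrak h}(u|_\beta))=\omega_{\partial K}(\mathfrak g_{\mathfrak h}(u|_\beta));
\]
since $\partial K$ is covered by finitely many such horograph charts, the finite additivity of the Radon measures $\Phi_1,\Phi_3,\omega_{\partial K}$ (and a partition of $\alpha$ subordinate to the charts) upgrades this to \eqref{eq_GClim2} for every Borel $\alpha\subset\partial K$.

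The step I expect to be the main obstacle is the convergence of the intrinsic curvature measures $\hat\omega_{u_j}\to\hat\omega_u$. Unlike the extrinsic measures, for which Theorem~\ref{th_convergingcurv} is available off the shelf, this demands checking that the metrics $g_j$ induced on $\partial K(u_j)$ form an \emph{admissible} approximating sequence for the BIC framework: beyond the uniform distance convergence furnished by Lemma~\ref{lem_FIV}, one needs a uniform bound on the total absolute curvature $\int|\sect_{g_j}|\,d\vol_{g_j}$ along the sequence. This bound should come from convexity: the negative part of $\sect_{g_j}$ is controlled since the Alexandrov curvature of $\partial K$ is bounded below by $-1$ (as recalled above) and the areas stay bounded, while the positive part is controlled, through the Gauss equation, by the extrinsic Gaussian curvature $H_2^{K(u_j)}$, whose integral is a multiple of $\Phi_1(K(u_j),\cdot)$ and hence of uniformly bounded total mass by Theorem~\ref{th_convergingcurv}. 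A secondary but genuine point is to rule out curvature mass escaping through $\partial\B_R$ during the projection and localization; this is handled by restricting to Borel sets $\beta$ compactly contained in $\B_R$ and exhausting.
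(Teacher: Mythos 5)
Your proposal follows essentially the same route as the paper: smooth horoconvex approximation $u_j\to u$, the classical Gauss--Codazzi identity \eqref{eq_GC_reg} at each smooth level, weak convergence of the extrinsic measures via Theorem \ref{th_convergingcurv} (using the Hausdorff convergence from Lemma \ref{lem_FIV}) and of the intrinsic measures via the BIC stability under uniform convergence of the induced distances, and then uniqueness of weak limits. Your added attention to the admissibility of the approximating metrics (the uniform total absolute curvature bound) and to the localization through the horograph charts only fills in details the paper leaves implicit, so there is nothing further to flag.
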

\begin{remark}\label{rmk_ac}{\rm The measure $\hat H_0^u$ is absolutely continuous with respect to the Lebesgue measure $\lambda$ of $\mathfrak h=\R^2$. Namely, one has for every Borel set $\beta\subset\R^2$, 
\begin{equation}\label{areameasure}
\hat H_0^u(\beta)=\int_\beta \sqrt{e^{4u(\mbx)}+e^{2u(\mbx)}|\nabla_\mbx u|^2}d\lambda(\mbx),
\end{equation}
where $\nabla_\mbx u$ is the gradient of $u$ at $\mbx$ defined for $\lambda$-a.e. point $\mbx$. When $u$ is smooth, formula \eqref{areameasure} can be deduced from \eqref{horometric} via a direct calculation. For general horoconvex function $u$, one can apply \eqref{areameasure} to the approximating sequence of smooth horoconvex functions $u_j$ introduced above, and use the fact that $u_j\to u$ uniformly and $W^{1,2}_{loc}$ in $\B_R$, that $\nabla u$ is in $L^2_{loc}$ since $u$ is Lipschitz, and that the weak distributional gradient of $u$ coincides with the a.e. defined pointwise gradient.

By the way, note that 
\begin{equation}\label{def_areameas}
\hat H^v_0(\beta)=\Phi_{3}(K(v),\mathfrak{g}(v|_\beta))=\mathcal H^2(\mathfrak{g}(v|_\beta))
\end{equation} 
is the 2-dimensional Hausdorff measure of $\mathfrak{g}(v|_\beta)$. This follows from the co-area formula applied to the Lipschitz map $\R^2\to\H^3$ defined by $\mbx\mapsto(\mbx,u(\mbx))_{\mathfrak h}$.

}\end{remark}
\begin{remark}{\rm The generalized Gauss theorem for convex sets in $\H^3$ was already stated in \cite[Chapter XII, p. 397]{Al} and \cite[Section V.2]{Po}. Actually, to define the extrinsic curvature of the boundary of a convex set, Alexandrov used a different approach based on the local Euclidean character of $\H^3$. The validity of \eqref{eq_GClim2} proves also that Alexandrov's definition of extrinsic curvature is equivalent to Kohlmann's definition of the curvature measure $\Phi_1(K,\cdot)$.}\end{remark}

\section{The regular part of the curvature measure}\label{sect_reg}

In this section we will prove that the local curvature at normal points $\mathrm{LocCurv}$ defined in Section \ref{sect_biLip} is in fact the regular part of the curvature measure. Thanks to Gauss equation \eqref{eq_GClim2} here one can consider indifferently (up to a constant factor) the intrinsic curvature measure $\omega_{\partial K}(\cdot)$ or the extrinsic curvature measure $\Phi_1(K,\cdot)$. 
Since both measures $\Phi_1(K,\cdot)$ and $\mathcal H^2$ (restricted to Borel sets) are Radon, according to Lebesgue decomposition theorem one can write 
\[ 
\Phi_1(K,\cdot)=\Phi_1(K,\cdot)_{reg}+\Phi_1(K,\cdot)_{sing}
\]
where the singular part $\Phi_1(K,\cdot)_{sing}$ is supported by a set of  nul $2$-dimensional Hausdorff measure, while $\Phi_1(K,\cdot)_{reg}$ is absolutely continuous with respect to $\mathcal H^{2}$.

\begin{theorem}\label{th_regpart}
Let $K\in \mathcal K(\H^3)$. For every Borel set $\alpha\subset\partial K$ it holds
\begin{equation}\label{eq_regpart}
(\omega_{\partial K})_{reg}(\alpha)=\frac{1}{3}\Phi_1(K,\alpha)_{reg}+\mathcal H^2(\alpha)=\int_\alpha \mathrm{LocCurv}_{\partial K}(q) d\mathcal H^2(q).
\end{equation}
\end{theorem}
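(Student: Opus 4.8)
The plan is to separate the two asserted equalities, since they rest on different mechanisms: the first is a formal consequence of the generalized Gauss theorem plus uniqueness of the Lebesgue decomposition, whereas the second is an infinitesimal identity that I would extract from the biLipschitz approximation of Corollary \ref{coro_biLip} through a differentiation-of-measures argument.

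First I would record that, by \eqref{def_areameas}, one has $\Phi_3(K,\cdot)=\mathcal H^2$ as measures on $\partial K$, so that the generalized Gauss equation \eqref{eq_GClim2} reads $\omega_{\partial K}=\frac13\Phi_1(K,\cdot)+\mathcal H^2$. Splitting $\Phi_1(K,\cdot)=\Phi_1(K,\cdot)_{reg}+\Phi_1(K,\cdot)_{sing}$ with respect to $\mathcal H^2$ (legitimate, both being Radon), the measure $\frac13\Phi_1(K,\cdot)_{reg}+\mathcal H^2$ is absolutely continuous with respect to $\mathcal H^2$, while $\frac13\Phi_1(K,\cdot)_{sing}$ is $\mathcal H^2$-singular; by uniqueness of the Lebesgue decomposition of $\omega_{\partial K}$ this forces $(\omega_{\partial K})_{reg}=\frac13\Phi_1(K,\cdot)_{reg}+\mathcal H^2$ (and $(\omega_{\partial K})_{sing}=\frac13\Phi_1(K,\cdot)_{sing}$), which is the first equality in \eqref{eq_regpart}.

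For the second equality I would argue pointwise through Radon--Nikodym densities. Since $\partial K$, endowed with the metric induced by $\H^3$, has curvature bounded below and hence is locally volume doubling, the Lebesgue--Besicovitch differentiation theorem applies to the Radon measures $\omega_{\partial K}$ and $\mathcal H^2$, yielding for $\mathcal H^2$-a.e.\ $q\in\partial K$
\[
\frac{d(\omega_{\partial K})_{reg}}{d\mathcal H^2}(q)=\lim_{\epsilon\to0}\frac{\omega_{\partial K}(B_\epsilon(q))}{\mathcal H^2(B_\epsilon(q))},
\]
the singular part contributing a vanishing proportion of $\mathcal H^2(B_\epsilon(q))$ at $\mathcal H^2$-a.e.\ point. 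By Proposition \ref{prop_normal} I may restrict to normal $q$, and then Corollary \ref{coro_biLip} furnishes, for $\epsilon\ll1$, a $(1+o(\epsilon^2))$-biLipschitz identification of $B_\epsilon(q)$ with an open set $U_\epsilon$ of a surface $S_k$ of constant curvature $k=\mathrm{LocCurv}_{\partial K}(q)$. The near-isometry transfers areas immediately, $\mathcal H^2(B_\epsilon(q))=(1+o(\epsilon^2))\mathcal H^2(U_\epsilon)=\pi\epsilon^2(1+o(1))$; since $\omega_{S_k}=k\,\mathcal H^2$, it remains only to show that the curvature mass transfers as well, namely $\omega_{\partial K}(B_\epsilon(q))=k\pi\epsilon^2+o(\epsilon^2)$. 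Granting this, the displayed ratio tends to $k$, so the Radon--Nikodym density coincides with $\mathrm{LocCurv}_{\partial K}(q)$ for $\mathcal H^2$-a.e.\ $q$; in particular this density is $\mathcal H^2$-measurable, and since the non-normal points form an $\mathcal H^2$-null set the integral $\int_\alpha\mathrm{LocCurv}_{\partial K}\,d\mathcal H^2$ is unambiguous and equals $(\omega_{\partial K})_{reg}(\alpha)$.

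The main obstacle is exactly this transfer of curvature mass, because the intrinsic curvature measure of a BIC surface is a second-order object and is not, a priori, preserved by biLipschitz homeomorphisms; everything hinges on the specific rate $(1+o(\epsilon^2))$. I would exploit it through Gauss--Bonnet: for a.e.\ radius the circle $\partial B_\epsilon(q)$ is rectifiable and $\omega_{\partial K}(B_\epsilon(q))=2\pi-\tau(\epsilon)$, $\tau(\epsilon)$ being its total rotation, with the analogous identity on $S_k$. Since distances are distorted only by $(1+o(\epsilon^2))$, the image of $B_\epsilon(q)$ is squeezed between concentric geodesic balls of $S_k$ whose radii differ by $o(\epsilon^3)$, so their curvature masses differ by $o(\epsilon^4)$; comparing the two Gauss--Bonnet identities then reduces the claim to showing that the rotations of the two boundary curves differ by $o(\epsilon^2)$, which gives $\omega_{\partial K}(B_\epsilon(q))=k\pi\epsilon^2+o(\epsilon^2)$. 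An alternative is to compare $\partial K$ directly with the smooth second-order model $\partial\bar K$ of Proposition \ref{pr_biLip}, whose curvature density is continuous and equals $k$ at $q$, invoking the weak stability of BIC curvature measures from Section \ref{sect_conv}. In either route the genuinely delicate point is upgrading the control from fixed open sets to the \emph{shrinking} balls $B_\epsilon(q)$ at the sharp order $o(\epsilon^2)$, and this is where I expect the real work to lie.
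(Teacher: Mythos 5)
Your treatment of the first equality is fine and matches the paper's (uniqueness of the Lebesgue decomposition applied to the generalized Gauss equation \eqref{eq_GClim2} together with $\Phi_3(K,\cdot)=\mathcal H^2$ from \eqref{def_areameas}). The problem is the second equality, where your argument has a genuine gap that you yourself flag but do not close: everything reduces to showing $\omega_{\partial K}(B_\epsilon(q))=k\pi\epsilon^2+o(\epsilon^2)$ at a normal point, and you propose to extract this from the $(1+o(\epsilon^2))$-biLipschitz identification of Corollary \ref{coro_biLip} via Gauss--Bonnet. But the intrinsic curvature measure is a second-order quantity, and a $(1+o(\epsilon^2))$-biLipschitz control of distances inside $B_\epsilon(q)$ does not by itself control the turn of the boundary circle to order $o(\epsilon^2)$: the turn is a sum of angle defects along a polygonal approximation with an unbounded number of vertices as the approximation refines, and the $o(\epsilon^2)$ perturbation of individual comparison angles can accumulate; moreover upper angles on Alexandrov surfaces are only semicontinuous under such perturbations, and metric circles $\partial B_\epsilon(q)$ need not be rectifiable with well-defined turn for all $\epsilon$. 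The alternative you sketch (weak stability of BIC curvature measures from Section \ref{sect_conv}) faces the same obstruction: weak convergence gives no rate on shrinking sets, and upgrading it to the sharp order $o(\epsilon^2)$ on $B_\epsilon(q)$ is precisely the content you would still have to supply. So as written the proposal is an accurate reduction of the theorem to its hardest step, not a proof of it.

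The paper avoids this entirely by never differentiating the intrinsic measure $\omega_{\partial K}$. It projects to the horosphere, fixes a point $\mbx$ of second differentiability of $u$, and compares the tube volumes $\mu_\rho(K(u),\cdot)$ and $\mu_\rho(K(\bar u),\cdot)$ over $\B_\epsilon(\mbx)$ using Lemma \ref{lem_projection} and $\sup_{\B_\epsilon(\mbx)}|u-\bar u|=o(\epsilon^2)$; this yields the two-sided bounds \eqref{comp_meas1}--\eqref{comp_meas2}, and letting $\rho$ vary in the Steiner polynomial separates the coefficients and shows that each extrinsic density $\hat H^u_k(\B_\epsilon(\mbx))/\lambda(\B_\epsilon(\mbx))$ converges to the corresponding smooth density $h^{\bar u}_k(\mbx)$ of the osculating model $\bar u$. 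Only then is the Gauss equation invoked, for the \emph{smooth} surface $\partial K(\bar u)$, to identify $h^{\bar u}_2/h^{\bar u}_0+1$ with $\sect_{\partial\bar K}(q)=\mathrm{LocCurv}_{\partial K}(q)$; Corollary \ref{coro_biLip} is used only to define $\mathrm{LocCurv}$, not to transport any measure. If you want to salvage your route, the cleanest fix is to adopt this extrinsic comparison: replace the differentiation of $\omega_{\partial K}$ by the differentiation of $\frac13\Phi_1(K,\cdot)+\mathcal H^2$, which by \eqref{eq_GClim2} computes the same Radon--Nikodym density, and control the latter through the first-order (tube volume) object $\mu_\rho$, for which the $o(\epsilon^2)$ closeness of $u$ and $\bar u$ is directly usable.
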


Note that $\mathrm{LocCurv}_{\partial K}$ is a.e. defined. Its local integrability will be a byproduct of the proof.

\begin{proof}
The first equality is a direct consequence of \eqref{def_areameas} and Gauss equation \eqref{eq_GClim2}. Clearly we can prove the second equality locally.

As in Section \ref{sect_bdy}, let $u:\B_R\subset \mathfrak{h}\to \R$ be a horoconvex function defined on a horosphere $\mathfrak{h}$ such that its graph $\mathfrak g_\mathfrak{h}(u)$ is an open set of $\partial K$. 
Projecting onto $\mathfrak{h}$, the identity \eqref{eq_regpart} corresponds to
\[
(\hat H_{2}^u)_{reg}(\beta)+\hat H_{0}^u(\beta)= \int_\beta \mathrm{LocCurv}_{\partial K}((\mbx,u(\mbx))_\mathfrak{h}) d\hat H_0^u(\mbx)
\]
for any Borel set $\beta\in \B_R\subset \mathfrak{h}$.
According to a version of the Lebesgue decomposition theorem, \cite[p. 42]{EG}, this latter is equivalent to
\begin{equation}\label{ptwcurv}
\lim_{\epsilon\to 0}\frac{\hat H_{2}^u(\B_\epsilon(\mbx))+\hat H_{0}^u(\B_\epsilon(\mbx))}{\hat H^u_0(\B_\epsilon(\mbx))}=\mathrm{LocCurv}_{\partial K}((\mbx,u(\mbx))_\mathfrak{h})
\end{equation}
for $\hat H^u_0$-a.e. $\mbx\in\B_R$, and in view of Remark \ref{rmk_ac} it is enough to check the latter equality for $\lambda$-a.e. $\mbx\in\B_R$. 
Indeed, we are going to prove that \eqref{ptwcurv} holds at any $\mbx$ at which $u$ is twice differentiable. 

Fix such an $\mbx$. 
Define $\bar u:\mathfrak{h}\to \R$ as in \eqref{def_ubar} so that \eqref{2ndord} holds. Set $q:=(\mbx,u(\mbx))_\mathfrak{h}$ and 
$K(\bar u)$ as in \eqref{def_Kv} so that $\mathfrak{g}_\mathfrak{h}(\bar u|_{\B_R})\subset \partial K(\bar u)$. We have
\begin{equation}\label{asym_haus}
\dist_{\mathcal H}(\mathfrak{g}_\mathfrak{h}(u|_{\B_\epsilon(\mbx)}),\mathfrak{g}_\mathfrak{h}(\bar u|_{\B_\epsilon(\mbx)}))\leq\sup_{\B_\epsilon(\mbx)}|u-\bar u|=o(\epsilon^2).
\end{equation}

%
Fix $\rho>0$ and let $p\in M_\rho(K(u),\mathfrak{g}(u|_{\B_\epsilon(\mbx)}))$. Either $p\in K(\bar u)$ or not. In the first case, since $$\diht(p,K(u))=\diht(p,f_{K(u)}(p))=o(\epsilon^2)$$ and since $f_{K(u)}(p)\in\mathfrak{g}(u|_{\B_\epsilon(\mbx)})$, we get for $\epsilon$ small enough
\begin{equation}\label{1stq}
p\in\mathcal A_{\bar u}^u:=\{(\mathbf y,z)_{\mathfrak{h}}\in \H^3\ :\ \mathbf y\in \B_{2\epsilon}(\mbx)\text{ and }\bar u(\mathbf y)\leq z \leq u(\mathbf y)\}.
\end{equation} 
A direct calculation gives $\vol_{\H^3}(\mathcal A_{\bar u}^u)=o(\epsilon^3).$ On the other hand, if  $p\not \in K(\bar u)$, \eqref{asym_haus} and Lemma \ref{lem_projection} imply
\[\diht(f_{K(u)}(p),f_{K(\bar u)}(p))\leq 2(\tanh \rho)^{1/2}o(\epsilon)+ o(\epsilon^2)=o(\epsilon),
\]
while
\[
\diht(p,f_{K(\bar u)}(p))=\diht(p,K(\bar u))\leq \diht(p,f_{ K(u)}(p))+\diht(f_{ K(u)}(p), K(\bar u))\leq  \rho + o(\epsilon^2).
\]
Combining this latter with \eqref{1stq} we deduce that 
\[
M_\rho(K(u),\mathfrak{g}(u|_{\B_\epsilon(\mbx)}))\subset M_{\rho+o(\epsilon^2)}(K(\bar u),\mathfrak{g}(\bar u|_{\B_{\epsilon+o(\epsilon)}(\mbx)})) \cup \mathcal A_{\bar u}^u,
\]
which in turn implies
\[
\mu_\rho(K(u),\mathfrak{g}(u|_{\B_\epsilon(\mbx)}))\leq \mu_{\rho+o(\epsilon^2)}(K(\bar u),\mathfrak{g}(\bar u|_{\B_{\epsilon+o(\epsilon)}(\mbx)})) + o(\epsilon^3).
\]
Note that for $1\leq k\leq 3$ it holds $\ell_k(\rho+o(\epsilon^2))=\ell_k(\rho)+o(\epsilon^2)$.  Then Theorem \ref{th_kohl} gives 
\[
\sum_{r=0}^{2}\ell_{3-r}(\rho)\Phi_r(K(u),\mathfrak{g}(u|_{\B_\epsilon(\mbx)}))\leq \sum_{r=0}^{2}(\ell_{3-r}(\rho)+o(\epsilon^2))\Phi_r(K(\bar u),\mathfrak{g}(\bar u|_{\B_{\epsilon+o(\epsilon)}(\mbx)})),
\]
that is
\begin{equation}\label{comp_meas1}
\sum_{k=0}^{2}\ell_{k+1}(\rho)\left(\begin{array}{c}
3\\k
\end{array}\right) H_{k}^u(\B_\epsilon(\mbx))\leq \sum_{k=0}^{2}(\ell_{k+1}(\rho)+o(\epsilon^2))\left(\begin{array}{c}
3\\k
\end{array}\right)\hat H_{k}^{\bar u}(\B_{\epsilon+o(\epsilon)}(\mbx)).
\end{equation}
Moreover, interchanging the roles of $u$ and $\bar u$ we also get the converse relation
\begin{equation}\label{comp_meas2}
\sum_{k=0}^{2}\ell_{k+1}(\rho)\left(\begin{array}{c}
3\\k
\end{array}\right)\hat H_{k}^u(\B_\epsilon(\mbx))\geq \sum_{k=0}^{2}(\ell_{k+1}(\rho)-o(\epsilon^2))\left(\begin{array}{c}
3\\k
\end{array}\right)\hat H_{k}^{\bar u}(\B_{\epsilon-o(\epsilon)}(\mbx)).
\end{equation}
Since $\bar u$ is smooth, from \eqref{eq_regphi}, \eqref{def_hatmu} and Lebesgue differentiation theorem we have that the limits
\begin{equation}\label{eq_limits}
h^{\bar u}_k(\mbx):=\lim_{\epsilon\to 0}\frac{\hat H_{k}^{\bar u}(\B_{\epsilon-o(\epsilon)}(\mbx))}{\lambda(\B_{\epsilon}(\mbx))}=\lim_{\epsilon\to 0}\frac{\hat H_{k}^{\bar u}(\B_{\epsilon+o(\epsilon)}(\mbx))}{\lambda(\B_{\epsilon}(\mbx))},\quad 0\leq k\leq 2,
\end{equation}
exist and are finite.
Letting $\epsilon\to 0$, \eqref{comp_meas1} and \eqref{comp_meas2} thus give
\begin{align*}
\sum_{k=0}^{2}\ell_{k+1}(\rho) \left(\begin{array}{c}
3\\k
\end{array}\right)\lim_{\epsilon\to 0}\frac{\hat H_{k}^u(\B_\epsilon(\mbx))}{\lambda(\B_{\epsilon}(\mbx))}
&=
\lim_{\epsilon\to 0}\frac{\sum_{k=0}^{2}\ell_{k+1}(\rho)\left(\begin{array}{c}
3\\k
\end{array}\right)\hat H_{k}^u(\B_\epsilon(\mbx))}{\lambda(\B_{\epsilon}(\mbx))}\\
&=\sum_{k=0}^{2}\ell_{k+1}(\rho)\left(\begin{array}{c}
3\\k
\end{array}\right)h^{\bar u}_k(\mathbf x).
\end{align*}
Since this latter is true for arbitrary small enough positive $\rho$, it yields
\begin{align*}
\lim_{\epsilon\to 0}\frac{\hat H_{k}^u(\B_\epsilon(\mbx))}{\lambda(\B_{\epsilon}(\mbx))}=
h^{\bar u}_k(\mathbf x),\quad 0\leq k\leq 2.
\end{align*}
Hence 
\begin{align*}
\lim_{\epsilon\to 0}\frac{\hat H^u_{2}(\B_\epsilon(\mbx))+\hat H^u_0(\B_\epsilon(\mbx))}{\lambda(\B_{\epsilon}(\mbx))}=
h^{\bar u}_{2}(\mathbf x)+h^{\bar u}_0(\mathbf x),
\end{align*}
which gives
\begin{align}\label{1streg}
\lim_{\epsilon\to 0}\frac{\hat H^u_{2}(\B_\epsilon(\mbx))+\hat H^u_0(\B_\epsilon(\mbx))}{\hat H^u_0(\B_\epsilon(\mbx))}
&=
\lim_{\epsilon\to 0}\frac{\hat H^u_{2}(\B_\epsilon(\mbx))+\hat H^u_0(\B_\epsilon(\mbx))}{\lambda(\B_\epsilon(\mbx))}\lim_{\epsilon\to 0}\frac{\lambda(\B_\epsilon(\mbx))}{\hat H^u_0(\B_\epsilon(\mbx))}\\
&=\frac{h_{2}^{\bar u}(\mbx)}{{h_0^{\bar u}(\mbx)}}+1.\nonumber
\end{align}
On the other hand, by \eqref{eq_GC}, \eqref{eq_limits} and Lebesgue differentiation, 
\begin{align*}
\frac{h_2^{\bar u}(\mbx)}{h_0^{\bar u}(\mbx)}+1&
=\lim_{\epsilon\to 0}\frac{\hat H^{\bar u}_{2}(\B_\epsilon(\mbx))+\hat H^{\bar u}_{0}(\B_\epsilon(\mbx))}{\hat H^{\bar u}_0(\B_\epsilon(\mbx))}\\
&=\lim_{\epsilon\to 0}\frac{\omega_{\partial K(\bar u)}(\mathfrak g_{\mathfrak h}(\bar u|_{\B_\epsilon(\mbx)}))}{\mathcal H^2(\mathfrak g_{\mathfrak h}(\bar u|_{\B_\epsilon(\mbx)}))}\\
&=\lim_{\epsilon\to 0}\frac{\int_{\mathfrak g_{\mathfrak h}(\bar u|_{\B_\epsilon})}\sect_{\partial \bar K}(p) d\mathcal H^2(p)}{\mathcal H^2(\mathfrak g_{\mathfrak h}(\bar u|_{\B_\epsilon(\mbx))})}\\
&=\sect_{\partial \bar K}(q)\\
&=\mathrm{LocCurv}_{\partial K}(q),\end{align*}
which together with \eqref{1streg} prove \eqref{ptwcurv}, and hence \eqref{eq_regpart}.
\end{proof}

In \cite{Machi}, Machigashira studied the (intrinsic) curvature measures of a surface $(S,d)$ which has curvature bounded below by $k\in \R$ in the sense of Alexandrov. Namely, he introduced the so called \textsl{curvature regular points}, which form a set of full $\mathcal H^2$ measure in $S$, and characterized the regular part of the curvature measure in term of the asymptotics of the upper excess at curvature regular points. To this end, he defined the Gaussian curvature function $G$ by
\begin{equation}\label{eq_gauss}
G(x)=\inf_{d>0} \underline{G_d}(x);\qquad\underline{G_d}(x):=\liminf_{\Delta\to \{x\}, x\in\Delta,\Delta\in U_d}\frac{e(\Delta)}{Area(\Delta)}.
\end{equation}
and proved that 
\begin{equation*}
(\omega_{(S,d)})_{reg}(\alpha)=\int_\alpha G(q) d\mathcal H^2(q).
\end{equation*}
The liminf in \eqref{eq_gauss} is taken for geodesic triangles converging to the point $x$, containing $x$ in their interior, and with interior angles greater than $d>0$. Moreover the excess $e(\Delta)$ of the geodesic triangle $\Delta$ is defined so that $e(\Delta)+\pi$ is the sum of the interior angles of $\Delta$.

As discussed above, according to a well-known result by Alexandrov any $CBB(k)$ surface is locally isometric to an open set of the boundary $\partial K$ of a convex set $K$ in the simply connected space form of constant curvature $k$. Clearly, up to a rescaling of the metric, we can always suppose that one has $k\geq -1$ locally, and hence take $K\subset\H^3$ and $\omega_{(S,d)}=\omega_{\partial K}$. Either $K\in \mathcal K(\H^3)$, i.e. it has non-empty interior, or $K$ is isometric to the double of a convex set $\mathfrak K$ of $\H^2$. In the first case, Theorem \ref{th_regpart} applies to $(S,d)$ and a further characterization of the regular part of the curvature measure of a $CBB(k)$ surface is given by the relation
\begin{equation}\label{eq_Machi}
G(q)=\mathrm{LocCurv}_{\partial K}(q).
\end{equation}
In the second case, all points but the ones in $\partial \mathfrak K$, hence up to a set of nul $\mathcal H^2$-measure, have a hyperbolic neighborhood, so that for those points trivially
\[
G(q)=-1=\mathrm{LocCurv}_{\partial K}(q).\]
\begin{remark}
The relation \eqref{eq_Machi}
together with the biLipschitz approximation around normal points given by Corollary \ref{coro_biLip}, will be used in \cite{Ve} to get a further characterization of the regular part of $CBB(K)$ surfaces via the asymptotic expansion of the extent around normal points.
\end{remark}

\textbf{Acknowledgement.} I would like to thank F. Fillastre for many useful discussions, especially concerning Euclidean convex sets, as well as for reading a first version of this article. I'm indebted to J. Bertrand for suggesting the application to $CBB(k)$ surfaces and for pointing me out Alexandrov's theorem on the a.e. second differentiability of convex functions.

This research has been conducted as part of the project Labex MME-DII (ANR11-LBX-0023-01). The author is member of the ``Gruppo Nazionale per l'Analisi Matematica, la Probabilit\'a e le loro Applicazioni'' (GNAMPA) of the \textsl{Istituto Nazionale di Alta Matematica} (INdAM).

\bibliographystyle{alpha}
\bibliography{scalar}
%
%
%
%
%
%

\end{document}